\documentclass{amsart}
\usepackage{amsmath,amsfonts,amssymb}
\usepackage{amsthm}
\usepackage{ifthen}
\usepackage{longtable}
\usepackage{stmaryrd}
\usepackage{array}
\usepackage{url}
\usepackage{hyperref}

\usepackage{amsmath,amssymb,amsbsy,amsfonts,amsthm,latexsym,
            amsopn,amstext,amsxtra,euscript,amscd, color}
\usepackage{hyperref}   
\usepackage[margin=4cm]{geometry}

\newcommand{\Z}{\mathbb{Z}}
\newcommand{\Q}{\mathbb{Q}}

\newcommand{\N}{\mathbb{N}}

\newtheorem{definition}{Definition}
\newtheorem*{theorem*}{Theorem}
\newtheorem{theorem}{Theorem}
\newtheorem{lemma}{Lemma}
\newtheorem{corollary}{Corollary}
\newtheorem{proposition}{Proposition}

\theoremstyle{remark}
\newtheorem{remark}{Remark}

\begin{document}
\title[Common values of linear recurrences.]{Large common values of  generalized  Ankeny-Brauer-Chowla recurrences. }
\subjclass[2020]{11D61, 11B37, 11J86} \keywords{ linear recurrence sequences, function fields and exponential Diophantine equations.}

\author[Armand Noubissie]{Armand Noubissie}
\address{
{Graz University of Technology}\newline
{Institute for Analysis and Number Theory}\newline
{M\"{u}nzgrabenstrasse 36/II, 8010 Graz, Austria}}
\email{\tt armand.noubissie@tugraz.at}


\author[Robert Tichy]{Robert Tichy}
\address{
{Graz University of Technology}\newline
{Institute for Analysis and Number Theory}\newline
{Steyrergasse 30/II, 8010 Graz, Austria}}
\email{\tt tichy@tugraz.at}

\begin{abstract}
In this paper  we count the number of common values shared by two linear recurrence sequences, whose  characteristic polynomials are  a generalized Ankeny-Brauer-Chowla polynomial and  its reciprocal. More precisely, we show that these sequences have at most two sufficiently large common values. Our proof combines  Baker's theory of linear forms in logarithms of algebraic numbers with techniques from function field  theory and  from Galois theory.

\end{abstract}

\maketitle

\section{Introduction}
Consider a polynomial with two indeterminates given by
\begin{equation}\label{eq:1}
  \pi_U(X) = X(X-P_1(U)) \cdots (X- P_{q-1}(U)) + 1,
 \end{equation}
where $q$ is a positive integer greater than $3$ and $P_1(U),\ldots,P_{q-1}(U)\in \mathbb{Q}[U]$ pairwise distinct with degrees $d_j = \deg P_j(U), ~~j=1,\ldots,q-1$ such that their leading coefficients are $\geq 1$ and  $0 \leq d_1 \leq d_2 \leq \cdots \leq d_{q-3}< d_{q-2} < d_{q-1}.$  This class of polynomials seems to be investigated for the first time by Ankeny, Brauer and Chowla in  \cite{ABC}. The authors considered the polynomial $\pi_U(X)$ where $P_{q-1}(u) = u$ and $P_1, P_2, \cdots, P_{q-2}$ are constants to exhibit a large class of totally real fields $K$ with class number $h_K$ as large as possible. To the honor of the authors we will call the polynomials in \eqref{eq:1}  generalized ABC polynomials. \\

More precisely, in \cite{ABC} the authors proved the following result. Let $n \geq 2$ and $s, t$ be any two non-negative integers such that $n=s+2t$. For an arbitrary small number $\tau >0$ there exist infinitely many algebraic number fields $K$ which have $s$ real and $t$ imaginary conjugate fields such that $h_K > \vert D_K\vert ^{1/2 -\tau}$ holds for the class number $h_K$ and the discriminant  $D_K$ of the number field $K$. Sprind\v{z}uk \cite{SP} showed that this property is in some sense generic for number fields.  He obtained an upper bound for the number $N_{n,\delta}(Z)$ of non-isomorphic number fields $K$ of degree $n$ with regulator $R_K \leq Z$ satisfying the inequality $h_K \leq \vert D_K \vert^{\delta}$ for any $0 \leq \delta < 1/2$. \\
 In \cite{BL} Bilu and Luca investigated divisibility properties of class numbers. Their main argument relies on properties of ABC polynomials and some ideas of \cite{HLPT}. More precisely, for positive integers $n\geq 3$ and $l$, they have shown for any sufficiently large $X$, there are at least $cX^{\mu}$  ( $c$ some positive constant and $\mu = \frac{1}{2l(n-1)}$) pairwise non-isomorphic number fields $K$ of degree $n$ with $\vert D_K \vert \leq X$ and $h_K$ divisible by $l$. \\

It is natural to associate to the generalized ABC polynomial in \eqref{eq:1} a parametric Thue equation 
 \begin{equation}\label{eq:2}
  Y^q\pi_U(X/Y) = X(X-P_1(U)Y) \cdots (X- P_{q-1}(U)Y) + Y^q = 1.
 \end{equation}
Bombieri and Schmidt \cite{BS} proved that a Thue equation of degree $d$ admits $O(d)$ solutions. Moreover, they pointed out  that the Thue equation \eqref{eq:2} with $P_1, P_2, \cdots, P_{q-1}$  pairwise distinct integer constants has at least $d+1$ solutions namely the so-called trivial solutions $(1,0), (0,1), (P_j,1)$ with $j \in \{1, 2, \cdots, d-1\}$ which makes their bound the best possible up to a constant. In 1993, E. Thomas \cite{T} investigated equation \eqref{eq:2} in the case $P_j \in \mathbb{Z}[u]$ for  $j = 1, 2, \cdots, d-1.$  He conjectured that equation  \eqref{eq:2} has only trivial solutions if $u$ is large enough and proved it in specific cases. For example, he showed that in case $d=3,~ P_1(u)=u^k, ~P_2(u)=u^l $ where $0<k<l$, his conjecture is true with an effectively computable lower bound of $u$. Since then, Thomas conjecture has been  investigated in the literature for small degrees ($d \leq 8$). For a survey, we refer to Heuberger \cite{H1}. The author in \cite{H2} settled Thomas conjecture under a certain condition on the $P_j's,$ and some cases of this conjecture were also investigated in \cite{HLPT}. The function field analogue of this conjecture has been proved by Ziegler \cite{Z} under a weak condition on the degrees of the polynomials $P_j's$ and this study was recently extended to norm form equations by Amoroso  et al. \cite{AMS}.
This paper was motivated by the articles \cite{Z}, \cite{PT} and \cite{Pe}. In \cite{PT},  Peth\H{o}  and Tengely  studied successfully the common values of linear recurrence sequence associated to Shanks'  simplest cubics.\\
  
  Let $(A_n(u))_n$ be a linear recurrence sequence with characteristic polynomial given by a generalized ABC polynomial defined in \eqref{eq:1} and in the present paper we assume that the initial terms $A_0, \cdots, A_{q-1} \in \mathbb{Q}$. It is easy to see that the sequence $(A_n(u))_n$ can be extended to negative indices such that all members of the two-sided sequence are rational numbers. Hence the search for the integers which appear multiple times in the two - parametric sequence $(A_n(u))_n$ and $(A_{-n}(u))_{n}$ is meaningful and non-trivial. In the language of Diophantine equations this is equivalent  to find solutions of the equation 
  \begin{equation}\label{eq:3}
 \vert A_n(u) \vert = \vert A_m(u) \vert, \quad \mbox{in}\quad n, m, u \in \mathbb{Z}.
 \end{equation}
Notice that $A_{-m}(u) = B_{m}(u)$ where the characteristic polynomial of the recurrence sequence $B_m$  is the reciprocal of the characteristic polynomial of the recurrence sequence $A_m$. Before stating our main result, we first define the notion of large solutions which is crucial for the whole  paper.

\begin{definition}\label{1}
 Let $c_1, c_2$ be  effectively computable positive constants only  depending  on $P_j$ and $A_j$ where $1 \leq j \leq q-1$.   For fixed $u>c_2,$ we  say that a pair of integers $(n,m)$ is a large solution with respect to $u$  if $(n,m,u)$ is a solution to equation \eqref{eq:3}, $n \neq m$ and $\min\{\vert n \vert, \vert m \vert \} \geq c_1$.
\end{definition}
\begin{remark}
 In \cite[Theorem 1]{Pe}, Peth\H{o} considered equation \eqref{eq:3}  in the case  where  $P_{q-1}(u) = u$ and $P_1, P_2, \cdots, P_{q-2}$ are constants. Furthermore,  he had to assume that   $ P_1  \cdots  P_{q-2} = \pm 1$ and  proved in this case that \eqref{eq:3} does not have a large solution.
 \end{remark}

Our result can now be stated as follows.
\begin{theorem}\label{thm1}
 For any $u>c_2,$ the equation \eqref{eq:3} has at most $2$ large solutions.
\end{theorem}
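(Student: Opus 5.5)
We would begin from the Binet representation of $A_n(u)$ and control the roots of $\pi_u(X)$ asymptotically in $u$. Since the degrees satisfy $d_1\le\cdots\le d_{q-3}<d_{q-2}<d_{q-1}$, for $u$ large the polynomial $\pi_u$ has $q$ distinct real roots $\alpha_0,\alpha_1,\dots,\alpha_{q-1}$. Perturbation around the roots $0,P_1(u),\dots,P_{q-1}(u)$ of $\pi_u-1$ gives
\[
\alpha_j = P_j(u) + O\bigl(u^{-e_j}\bigr), \qquad \alpha_0 = \pm\frac{1}{P_1(u)\cdots P_{q-1}(u)}+\cdots ,
\]
with explicit exponents $e_j$ computed from the $d_i$. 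Hence $|\alpha_{q-1}|\asymp u^{d_{q-1}}$ strictly dominates $|\alpha_{q-2}|\asymp u^{d_{q-2}}$, and $\alpha_0$ is the unique root of smallest modulus, $|\alpha_0|\asymp u^{-(d_1+\cdots+d_{q-1})}$. The product of the roots is $\pm1$, so all $\alpha_j$ are units. We write
\[
A_n(u)=\sum_j \gamma_j\alpha_j^n ,
\]
where the $\gamma_j$ lie in the splitting field and are obtained from the initial values via a Vandermonde system, so they also have Puiseux-type expansions in $u$. We must check that $\gamma_{q-1}\neq 0$ and $\gamma_0\neq0$. We plan to do this by Galois theory: for large $u$, the polynomial $\pi_u$ is irreducible and its Galois group acts transitively on the roots, so a vanishing $\gamma_j$ would force all of them to vanish, contradicting the assumption that the sequence is nondegenerate. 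The same argument in the function field $\Q(U)$ (the Ziegler-type setting) handles the generic situation.

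Next we reduce to sign cases. For $n$ positive and large, $A_n\approx\gamma_{q-1}\alpha_{q-1}^n$. For $n$ negative, $A_n\approx\gamma_0\alpha_0^{n}$, with $|\alpha_0^{-1}|\asymp u^{D}$ where $D=\sum d_j$. The equation $|A_n|=|A_m|$ with $n\neq m$ and both indices of the same sign is impossible for $\min(|n|,|m|)\ge c_1$, by strict monotonicity of the dominant term: the ratio $|\alpha_{q-1}|$ is a growth factor $\gg u$ that beats all error terms. So a large solution must have $n>0>m$ (or the reverse), and it reads
\[
|\gamma_{q-1}|\,|\alpha_{q-1}|^n\bigl(1+O(\rho^n)\bigr)=|\gamma_0|\,|\alpha_0|^{m}\bigl(1+O(\rho'^{|m|})\bigr).
\]
Taking logarithms gives a linear form
\[
\Lambda=n\log|\alpha_{q-1}|+|m|\log|\alpha_0|+\log\left|\gamma_{q-1}/\gamma_0\right|
\]
that is exponentially small in $\min(n,|m|)$. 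First, the comparison of sizes forces $n\,d_{q-1}\approx |m| D$ up to $O(1)$, so $n/|m|$ is pinned to the rational $D/d_{q-1}$ to within $O(1/|m|)$. Second, Baker's theorem (Matveev's bound) with heights $\ll\log u$ gives $|\Lambda|>\exp(-C(\log u)^3\log n)$, which bounds $n\ll(\log u)^{4}$ or so. Combined with the exponential smallness, this shows the possible pairs $(n,|m|)$ lie close to the line $n d_{q-1}=|m|D$.

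The main obstacle is to turn this into the sharp count of at most $2$. Our plan is to expand both sides as Puiseux series in $u$ (function field argument). Equality for infinitely many $u$, or for $u>c_2$ with bounded $n,m$, forces equality of the leading terms of $\pm A_n(U)$ and $\pm A_m(U)$ in $\Q((U^{-1/e}))$. The leading exponents are $n d_{q-1}$ and $|m|D$ plus the shifts coming from $\gamma_{q-1},\gamma_0$, so they must satisfy one exact linear relation $n d_{q-1}+a=|m|D+b$. Moreover, matching the next coefficients, which are governed by $\alpha_{q-2}$ and by the secondary terms of $\alpha_0$, gives a second relation. Up to the symmetry $(n,m)\leftrightarrow(m,n)$, this leaves at most one pair, hence at most two large solutions. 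If the second-term comparison fails to pin things down, we would instead argue that two distinct pairs on the line would give two independent relations of the multiplicative type $\alpha_{q-1}^{k}\alpha_0^{l}\in\text{const}\cdot(1+o(1))$. These would be incompatible by the multiplicative independence of $\alpha_{q-1}$ and $\alpha_0$, which we would establish via the Galois action.
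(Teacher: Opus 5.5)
Your same-sign argument (monotonicity) and your Matveev bound $n\ll(\log u)^{O(1)}$ in the mixed-sign case agree with the paper. The step meant to produce the count, however, does not work. Theorem~\ref{thm1} is about one fixed integer $u>c_2$, and the indices of a solution depend on $u$; they are only bounded by a power of $\log u$. So there is no fixed pair $(n,m)$ for which the equation holds for infinitely many $u$. An equality $|A_n(u)|=|A_{-m}(u)|$ at a single specialization gives no identity between expansions in $\Q((U^{-1/e}))$: the polynomial $A_n(U)\mp A_{-m}(U)$ may simply vanish at that $u$, and excluding this uniformly in $(n,m)$ is the whole problem. Hence the ``exact'' relation $nd_{q-1}+a=|m|D+b$ and the ``second relation'' from next coefficients are unfounded. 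At fixed $u$ you only have approximate relations among the real numbers $\log|\alpha_{q-1}(u)|$, $\log|\alpha_0(u)^{-1}|$ and $\log|\gamma_0/\gamma_{q-1}|$, which are not rational multiples of $\log u$.

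Your fallback fails too, because multiplicative independence of $\alpha_0(u)$ and $\alpha_{q-1}(u)$ only excludes exact relations. Subtracting two solutions gives the two-term form $(n_2-n_1)\log|\alpha_{q-1}(u)|-(m_2-m_1)\log|\alpha_0(u)^{-1}|$ of size $\ll u^{-cn_1}$. Baker's theory bounds this below only by $\exp(-C(\log u)^2\log n_2)$, which yields $n_1\ll\log u\,\log n_2$ and no contradiction. So nothing in your plan excludes two mixed-sign pairs. Your claim of a single pair up to symmetry is also stronger than what the paper proves, namely at most two solutions of \eqref{eq:10}.

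The missing idea is to take three solutions and eliminate both the uncontrolled constant and one logarithm by linear algebra. Write $\theta_i=n_i\log|\alpha_{q-1}(u)^{-1}|+\log|\gamma_0/\gamma_{q-1}|+m_i\log|\alpha_0(u)^{-1}|$ for $i=1,2,3$. Cramer's rule gives $|\Delta|\log|\alpha_0(u)^{-1}|=|\Delta_3|$, where $\Delta\in\Z$ is the $3\times3$ determinant with rows $(n_i,1,m_i)$ and $|\Delta_3|\le 6n_3\max_i|\theta_i|\ll n_3u^{-cn_1}$.

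If $\Delta\neq0$, then $\log u\ll n_3u^{-cn_1}$, i.e.\ $\log n_3\gg n_1\log u$. This contradicts the Matveev bound $n_3\ll(\log u)^2\log\log u$.

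If $\Delta=0$, the points $(n_i,m_i)$ are collinear and $\Delta_3=0$, so $\frac{n_2-n_1}{n_3-n_2}=\frac{\theta_1-\theta_2}{\theta_2-\theta_3}$. The left side is at most $n_2-n_1$. Lemma~\ref{lem7} gives the second-order approximation $\theta\approx|\gamma_{q-2}/\gamma_{q-1}|\,|\alpha_{q-2}(u)/\alpha_{q-1}(u)|^n$; this is where $d_{q-3}<d_{q-2}<d_{q-1}$ enters. It makes the right side exceed $\frac12u^{n_2-n_1}$, a contradiction for large $u$.

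That is the real role of $\alpha_{q-2}$ your ``next coefficient'' idea points toward, but it must be carried out numerically at the fixed $u$. It also explains why the bound is $2$: with only two solutions the three-unknown system is underdetermined.
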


The rest of the paper is organized as follows. In section \ref{sec:2}, we investigate the algebraic properties of the polynomial $\pi_U(X)$. More precisely, we first show that, for sufficiently large $u$, the polynomial $\pi_u(X)$ is irreducible whenever the polynomials $P_i's$ have rational coefficients. This result is based on a finiteness result for Hilbert irreducibility theorem due to M\H{u}ller in \cite{M}. Secondly, we prove that all the roots of $\pi_u(X)$ are real whenever $u$ is sufficiently large. Section \ref{sec:3} is devoted to the study of arithmetic properties of the generalized ABC recurrence sequence $(A_n(u))_n$, with particular emphasis on its growth behavior for sufficiently large $u$. In the last section,  we first derive an effective upper bound for $\max \{|n|, |m|\}$ in terms of $u$ using Baker's method of linear forms in logarithms over algebraic numbers. To eliminate the dependence on $u,$ we have to apply linear algebra tools and elementary  properties of the  logarithm function, thereby completing the proof of Theorem \ref{thm1}.

\section{Preliminaries}\label{sec:2}

    For the algebraic function $f(x)$ with Puiseux expansion $f(x)= {\displaystyle \sum_{ j=m }^{ -\infty }}  f_j x^{j/d}, \; f_j\in \mathbb{C}$ let $\lfloor f \rfloor = {\displaystyle \sum_{j=m}^0}  f_j x^{j/d}$ be its {\it polynomial part}. Clearly, if $m<0$ then the sum is empty, which means $\lfloor f \rfloor = 0$. We denote by  $\nu_U$ the valuation defined over $\overline {\mathbb{C}(U)} = {\displaystyle \bigcup_{r \geq 1 }} \mathbb{C}((U^{-1/r}))$ corresponding to the place at infinity.

\begin{lemma} \label{lem1}

Let $q\ge 2$ and $Q_U(X)  \in \mathbb{C}[U,X]$ of degree $q$ in $X$. Denote $\beta_1(U),\ldots,\beta_q(U)$ the zeroes of $Q_U(X)$ in the algebraic closure $\overline {\mathbb{C}(U)}$. Denote $\alpha_0(U),\ldots,\alpha_q(U)$ the zeroes of $T_U(X)= XQ_U(X)+p$ with some $0\not=p\in \mathbb{C}$. If $\lfloor \beta_1(U) \rfloor,\ldots,\lfloor \beta_q(U) \rfloor$ are pairwise distinct and $\nu_U(\beta_q(U))< \nu_U(\beta_j(U)) \le 0, j=1,\ldots,q-1$, then $\alpha_0(U),\ldots,\alpha_q(U)$ are pairwise distinct as well and $\lfloor \alpha_j(U) \rfloor = \lfloor \beta_j(U) \rfloor, j=1,\ldots,q$ holds after a possible permutation of the $\alpha$'s.

\end{lemma}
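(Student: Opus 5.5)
The plan is to locate each root of $T_U$ in the Puiseux field $\overline{\mathbb{C}(U)}$ by a Newton polygon argument with respect to $\nu_U$, centred at each $\beta_j$. Write $Q_U(X)=c(U)\prod_{i=1}^q (X-\beta_i(U))$ with $0\neq c\in\mathbb{C}[U]$, so that $\nu_U(c)\le 0$.

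\textbf{Step 1: separation of the $\beta$'s.} Two elements with distinct polynomial parts differ by a series containing a nonzero term $x^{k/d}$ with $k\ge 0$. Hence $\nu_U(\beta_j-\beta_i)\le 0$ for all $i\neq j$. Moreover, since $\nu_U(\beta_q)<\nu_U(\beta_i)$, we get $\nu_U(\beta_i-\beta_q)=\nu_U(\beta_q)<0$ for $i\neq q$.

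\textbf{Step 2: Newton polygon at $\beta_j$.} Fix $j\in\{1,\dots,q\}$ and substitute $X=\beta_j+Y$:
\[
T_U(\beta_j+Y)=c\,(\beta_j+Y)\,Y\prod_{i\neq j}(\beta_j-\beta_i+Y)+p .
\]
\begin{itemize}
\item The constant term is $p$, of valuation $0$.
\item The linear coefficient is $a_1=c\,\beta_j\prod_{i\neq j}(\beta_j-\beta_i)$.
\item By Step 1, $\nu_U(a_1)\le \nu_U(\beta_j)+\nu_U(\beta_j-\beta_q)<0$ if $j\neq q$, and $\nu_U(a_1)\le\nu_U(\beta_q)<0$ if $j=q$.
\end{itemize}
I then intend to check that $(1,\nu_U(a_1))$ is a vertex of the Newton polygon, i.e.\ that $\nu_U(a_1)\le \nu_U(a_k)$ for $k\ge 2$. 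The coefficients $a_k$ are, up to the factor $c$, elementary symmetric expressions in $\beta_j$ and the differences $\beta_j-\beta_i$ ($i\neq j$). Dropping a factor of nonpositive valuation can only increase the valuation. Care is needed because $\beta_j$ itself may have valuation $0$; I would handle this by factoring $c\beta_j\prod(\beta_j-\beta_i+Y)$ and comparing term by term.

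Once the vertex is established, the first segment from $(0,0)$ to $(1,\nu_U(a_1))$ has length one. This yields exactly one root $Y_j$ with $\nu_U(Y_j)=-\nu_U(a_1)>0$. Setting $\alpha_j=\beta_j+Y_j$, we have $\nu_U(\alpha_j-\beta_j)>0$, so $\lfloor\alpha_j\rfloor=\lfloor\beta_j\rfloor$.

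\textbf{Step 3: counting and distinctness.} The roots $\alpha_1,\dots,\alpha_q$ are pairwise distinct because their polynomial parts are. The remaining root $\alpha_0$ is determined by Vieta:
\[
\alpha_0\prod_{j\ge1}\alpha_j=\pm p/c .
\]
Since $\nu_U(\alpha_j)=\nu_U(\beta_j)$ whenever $\nu_U(\beta_j)<0$, this gives $\nu_U(\alpha_0)=-\nu_U(Q_U(0))\ge 0$ in the generic case. More robustly, I would run the same Newton polygon argument for $T_U(Y)$ at $Y=0$, using $T_U(Y)=Q_U(0)Y+\dots+p$, to see that $\alpha_0$ lies near $-p/Q_U(0)$. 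To show $\alpha_0\neq\alpha_j$, I note that a double root would force $T_U'(\alpha)=0$ as well. Step 2 already shows that each $\alpha_j$ is a simple root, since the first Newton segment has length one. Hence $\alpha_0$, which lies outside all the discs $\nu_U(X-\beta_j)>0$ produced in Step 2 (those discs already contain exactly one root each), is distinct from every $\alpha_j$.

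\textbf{Main obstacle.} The delicate point is the vertex condition in Step 2 when some $\beta_j$ with $j<q$ has valuation exactly $0$. A further difficulty arises when $\beta_j$ is itself close to $0$, so that $\alpha_0$ and $\alpha_j$ compete for the same disc. I would first try to exclude this using $\nu_U(\beta_q)<0$, which forces the product of the differences to have strictly negative valuation. Failing that, I would fall back to a direct comparison of the $\nu_U$-values of all $q+1$ roots via the Newton polygon of $T_U$ itself.
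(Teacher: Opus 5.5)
Your proposal is correct, but it takes a genuinely different route from the paper.

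\textbf{The paper's route.} The paper never recentres at a single $\beta_j$. It works globally with the identities $\prod_{j=0}^q(\beta_k-\alpha_j)=p$ and $\prod_{j=0}^q\alpha_j=\pm p$ (tacitly taking $T_U$ monic, as $\pi_U$ is) together with the ultrametric inequality. It then argues by contradiction in stages:
\begin{enumerate}
\item some root shares the polynomial part of $\beta_q$;
\item some root $\alpha_0$ has polynomial part different from every $\lfloor\beta_k\rfloor$, and in fact $\nu_U(\alpha_0)=-\sum_k\nu_U(\beta_k)>0$;
\item every $\beta_k$ is then matched by some root;
\item a count of $q+1$ roots with pairwise distinct polynomial parts gives distinctness.
\end{enumerate}

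\textbf{Your route.} Your Newton polygon at each $\beta_j$ gives existence, uniqueness and simplicity of the root in each disc $\nu_U(X-\beta_j)>0$ in one stroke. So the distinctness of $\alpha_0$ is automatic, and a non-constant leading coefficient $c(U)$ costs nothing. The price is appealing to the Newton polygon theorem over the Puiseux field rather than bare valuation counting. Your Vieta relation also recovers the fact used in Corollary 1, namely $\nu_U(\alpha_0)=-\nu_U(c)-\sum_k\nu_U(\beta_k)>0$. This holds strictly, not only ``in the generic case'', because $\nu_U(\alpha_j)=\nu_U(\beta_j)$ for every $j\ge1$ and $\nu_U(\beta_q)<0$.

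\textbf{The obstacle you flag dissolves in one line.} For $k\ge1$, $a_k$ is $c$ times the coefficient of $Y^{k-1}$ in $g(Y)=(\beta_j+Y)\prod_{i\ne j}(\beta_j-\beta_i+Y)$. All roots of $g$ have valuation $\le 0$, and valuation exactly $0$ is harmless. Hence every coefficient of $g$ has valuation at least $\nu_U(g(0))$, which gives $\nu_U(a_k)\ge\nu_U(a_1)$ for all $k\ge1$. Combined with $\nu_U(a_1)<0=\nu_U(a_0)$, exactly one root of $T_U(\beta_j+Y)$, counted with multiplicity, has positive valuation. The case of $\beta_j$ near $0$ is ruled out by the hypothesis $\nu_U(\beta_j)\le 0$.
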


\begin{proof}

Assume first that $\lfloor \beta_q(U) \rfloor \not= \lfloor \alpha_j(U) \rfloor, j=0,\ldots,q$. Then $\nu_U(\beta_q(U) -\alpha_j(U))\le 0$ and equality holds only if $\nu_U(\alpha_j(U)) = \nu_U(\beta_q(U)) < 0$. Inserting $\beta_k(U),\; 1\le k\le q$ into $T_U(X)$ we get
$$
T_U(\beta_k(U)) = \beta_k Q_U(\beta_k(U))+p = p = \prod_{j=0}^q(\beta_k(U)-\alpha_j(U)),
$$
which yields
\begin{equation} \label{e:Pbeta}
 \sum_{j=0}^q \nu_U(\beta_k(U)-\alpha_j(U)) =0.
\end{equation}
Specializing $k=q,$ all summands are non-positive, hence the equality may hold only if $\nu_U(\beta_q(U)-\alpha_j(U)) = 0$ for all $j=0,\ldots,q$. As $\nu_U(\beta_q(U))<0$ this is only possible if $\nu_U(\alpha_j(U))  = \nu_U(\beta_q(U))< 0 $ for all $j=0,\ldots,q$. However, this contradicts 
\begin{equation} \label{e:Pbeta1}
\prod_{j=0}^q \alpha_j(U) = \pm p,
\end{equation}
which yields 
$$
\sum_{j=0}^q \nu_U(\alpha_j(U)) = 0.
$$
Thus there is a zero of $T_U(X)$, which will be denoted by $\alpha_q(U)$ such that, $\lfloor \alpha_q(U) \rfloor = \lfloor \beta_q(U) \rfloor$, especially $\nu_U(\alpha_q(U)) = \nu_U( \beta_q(U))<0 $.

Next we prove that there is a zero of $T_U(X)$, which will be denoted by $\alpha_0(U)$, satisfying $\lfloor \alpha_0(U) \rfloor \not= \lfloor \beta_j(U) \rfloor , j=1,\ldots,q$. Indeed, assume in the contrary that for all $j=0,\ldots,q$ there is a $1\le k(j)\le q$ such that $\lfloor \alpha_j(U) \rfloor = \lfloor \beta_{k(j)}(U) \rfloor ,\; j=1,\ldots,q$. Then $\nu_U( \alpha_j(U)) = \nu_U( \beta_{k(j)}(U) )\le 0 , \;j=1,\ldots,q$, and $\nu_U(\alpha_q(U)) = \nu_U( \beta_q(U))<0 $. Now \eqref{e:Pbeta1} yields the searched contradiction. Our second claim is correct as well. 

We prove additionally $\nu_U(\alpha_0(U))>0$. Indeed, for $0\le j\le q$ we have
$$
T_U(\alpha_j(U)) = 0 = \alpha_j(U)\prod_{k=1}^q (\alpha_j(U) - \beta_k(U)) + p,
$$ 
hence
\begin{equation} \label{e:Palpha}
\nu_U(\alpha_j(U)) + \sum_{k=1}^q \nu_U(\alpha_j(U) - \beta_k(U)) =0.
\end{equation}

We apply \eqref{e:Palpha} with $j=0$. As $\lfloor \alpha_0(U) \rfloor \not= \lfloor \beta_k(U) \rfloor , k=1,\ldots,q$, and $\nu_U(\beta_k(U))\le 0$, all, but possibly the first, summands are non-positive, but then the first summand has to be non-negative. If $\nu_U(\alpha_j(U)) = 0$ then $\nu_U(\alpha_0(U) - \beta_q(U))= \nu_U(\beta_q(U))<0$, which contradicts $\eqref{e:Palpha}$. Hence 
$\nu_U(\alpha_0(U)) > 0$, which yields $\nu_U(\alpha_0(U) - \beta_k(U))= \nu_U(\beta_k(U))$, and finally 
$$
\nu_U(\alpha_0(U)) = -\sum_{k=1}^q \nu_U(\beta_k(U)) >0.
$$
Assume now that there is $k=1,\ldots,q-1$ such that $\lfloor \beta_k(U) \rfloor \not= \lfloor \alpha_{j}(U) \rfloor ,\; j=0,\ldots,q$. This implies the  inequalities $\nu_U(\alpha_j(U)-\beta_k(U))\le 0$ for all $j=0,\ldots,q$. Equality \eqref{e:Pbeta} shows  that 
$$
\sum_{j=0}^q \nu_U(\beta_k(U)-\alpha_j(U)) =0.
$$

All summands of the left hand side are non-positive, hence equality may hold only if $\nu_U(\beta_k(U)-\alpha_j(U))=0$ for all $j=0,\ldots,q$. However this is simultaneously impossible for $j=0$ and $q$. 
Thus for all $k=1,\ldots,q-1$ there is a $0\le j(k)\le q$ such that $\lfloor \beta_k(U) \rfloor = \lfloor \alpha_{j(k)}(U) \rfloor$. This is true for $k=q$ as well. All these zeroes have non-zero polynomial parts, moreover their number is $q$ because the $\lfloor \beta_k(U) \rfloor$'s are pairwise distinct. We proved that $T_U(X)$ has the root $\alpha_0(U)$, the polynomial part of which differs from the polynomial parts of all $\beta_k(U)$'s. Altogether that are $q+1$ roots, but the number of zeroes of $T_U(X)$ is exactly the same. The Lemma is proved.
\end{proof} 

Now we specialize the result to polynomials with integer coefficients. 

\begin{corollary}\label{coro}

Let $q\ge 2$ and $Q_U(X)\in \mathbb{Z}[U,X]$ of degree $q$ in $X$. Assume that $T_U(X)$ has the properties given in Lemma \ref{lem1} and $\beta_q \in \mathbb{Q}[U]$. If $u\in \mathbb{Z}$ is large enough then $T_u(X)$ is irreducible with  dominating root $\alpha_{q}(u)$ and minorant root $\alpha_{0}(u)$.

\end{corollary}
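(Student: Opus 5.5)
The plan is to separate the two assertions: first the sizes of the roots (dominating $\alpha_q(u)$, minorant $\alpha_0(u)$), then irreducibility.

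\emph{Root sizes.} By Lemma~\ref{lem1}, $T_U(X)$ has $q+1$ distinct roots $\alpha_0(U),\ldots,\alpha_q(U)$ in $\overline{\mathbb{C}(U)}$. For $1\le j\le q$ we have $\lfloor\alpha_j\rfloor=\lfloor\beta_j\rfloor$. The proof of the lemma also gives $\nu_U(\alpha_0)=-\sum_k\nu_U(\beta_k)>0$ and $\nu_U(\alpha_q)=\nu_U(\beta_q)<\nu_U(\alpha_j)\le 0$ for $1\le j\le q-1$. These Puiseux series in $U^{-1/r}$ converge for $|u|$ large. Hence for real $u$ large,
\[
|\alpha_j(u)|\asymp |u|^{-\nu_U(\alpha_j)} .
\]
This yields $|\alpha_q(u)|>|\alpha_j(u)|>|\alpha_0(u)|$ for $1\le j\le q-1$, and $|\alpha_0(u)|\to 0$. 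So $\alpha_q(u)$ is dominating and $\alpha_0(u)$ is minorant once $u$ exceeds an effective bound.

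\emph{Irreducibility, strategy.} I would argue via factorization over $\mathbb{Q}(U)$ together with a Hilbert irreducibility finiteness statement (Müller's result \cite{M}, mentioned in the introduction). First I show $T_U(X)$ is irreducible in $\mathbb{Q}[U][X]$; Müller's theorem then controls the specializations $u\in\mathbb{Z}$ at which it becomes reducible.

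\emph{Irreducibility over $\mathbb{Q}(U)$.} Suppose $T_U=FG$ with $F,G\in\mathbb{Q}[U][X]$ monic in $X$ (we may take them monic since $T_U$ is monic in $X$ by Gauss). The constant term of $F$ is $\pm\prod_{j\in S}\alpha_j$ for some proper nonempty $S$, and it lies in $\mathbb{Q}[U]$, so its valuation is $\le 0$. The same holds for $G$ with the complement of $S$. Since the two products multiply to $\pm p$, both valuations are $0$. Consequently, if $0\in S$ then $\sum_{j\in S}\nu_U(\alpha_j)=0$, and otherwise the sum over the complement vanishes.

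This is where the hypothesis $\beta_q\in\mathbb{Q}[U]$ enters, and this step is the main obstacle. I would try to exploit that $\alpha_q$ has the unique most negative valuation, and that $\alpha_0$ has valuation exactly cancelling the total of all the others. A subset $S\ni 0$ with valuation sum $0$ must then either contain everything or, by strict inequalities, be impossible when all $\nu_U(\beta_j)<0$. When some $\nu_U(\beta_j)=0$, the valuation bookkeeping alone does not suffice. In that case I would instead reduce modulo: take $T_U \equiv X Q_U(X)$ near $p=0$, or use a Newton polygon at the place $U=\infty$. Its slopes are $\nu_U(\alpha_j)$, so any factor corresponds to a union of slope segments. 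Combining this with the integrality of $\beta_q$ should rule out factors.

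If this still fails, the fallback is a purely numerical, Pethő-style argument. For large integer $u$, a monic integer factor $F$ of $T_u$ has constant term a nonzero integer of absolute value $\prod_{j\in S}|\alpha_j(u)|$. This is $\ge 1$ for both $F$ and $G$, while the two products multiply to $|p|$. The asymptotics $|\alpha_j(u)|\asymp|u|^{-\nu_U(\alpha_j)}$ force $\sum_{j\in S}\nu_U(\alpha_j)=0$ exactly. The roots with $\nu_U=0$ are close to constant algebraic numbers (the constant polynomial parts). For these I would compare the integer constant term with the product of the limits, taking care of the $o(1)$ corrections, so as to derive a contradiction for $u$ large.
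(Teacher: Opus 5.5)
Your treatment of the root sizes is correct and matches the paper: read off $\nu_U(\alpha_q)=\nu_U(\beta_q)<\nu_U(\alpha_j)\le 0<\nu_U(\alpha_0)$ from Lemma~\ref{lem1} and specialize the convergent Puiseux expansions. The irreducibility part, however, has a genuine gap.

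First, the overall strategy does not give the statement. Irreducibility of $T_U$ over $\mathbb{Q}(U)$ plus Hilbert irreducibility only yields a thin exceptional set, not irreducibility for \emph{all} large $u$ (think of $X^2-U$ at square values of $u$). Müller's finiteness theorem needs further hypotheses at infinity; the paper has to verify such a condition in Lemma~\ref{lem2} before invoking it. It also rests on Siegel's theorem, so it would make the bound on $u$ ineffective, whereas the remark after Lemma~\ref{lem2} stresses that this corollary is effective.

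Second, the step you call the main obstacle is never resolved. The Newton polygon at $U=\infty$ encodes nothing beyond the valuations $\nu_U(\alpha_j)$, so it cannot exclude a factor all of whose roots have valuation $0$. ``Reducing near $p=0$'' has no meaning for a fixed $p\neq 0$. In your numerical fallback, knowing that the constant term of a factor is an integer close to a product of limiting constants gives no contradiction when that product is itself an integer (e.g.\ constant integer $P_j$ in the ABC setting). You say $\beta_q\in\mathbb{Q}[U]$ must enter, but you never use it.

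The paper simply points to the elementary Ankeny--Brauer--Chowla argument. The idea that makes such an argument work here is to evaluate a putative factorization at $X=\beta_q(u)$, where $T_u$ takes the small value $p$ because $Q_u(\beta_q(u))=0$. Suppose $T_u=GH$ with nonconstant monic $G,H\in\mathbb{Z}[X]$ and $\alpha_q(u)$ a root of $H$. Then $G=\prod_{j\in S}(X-\alpha_j(u))$ with $\emptyset\neq S\subseteq\{0,\dots,q-1\}$.

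Let $B$ be a common denominator of the coefficients of $\beta_q$. Then $B^{\deg G}G(\beta_q(u))$ and $B^{\deg H}H(\beta_q(u))$ are nonzero integers with product $B^{q+1}p$, so $|G(\beta_q(u))|\le B^{q+1}|p|$. On the other hand, $\nu_U(\alpha_j)>\nu_U(\beta_q)$ for every $j\neq q$. Hence $\nu_U(\beta_q-\alpha_j)=\nu_U(\beta_q)<0$, and so $|G(\beta_q(u))|\gg |u|^{-|S|\,\nu_U(\beta_q)}\to\infty$. This is a contradiction for effectively large $u$, and no Hilbert irreducibility is needed.

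The same computation in $\mathbb{Q}[U]$ would also have settled your obstacle over $\mathbb{Q}(U)$. There $G(\beta_q)H(\beta_q)=p$ forces $\nu_U(G(\beta_q))=0$, although $\nu_U(G(\beta_q))=|S|\,\nu_U(\beta_q)<0$.
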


\begin{proof}

By Lemma \ref{lem1}  we have $\nu_U(\alpha_{q}(U)) = \nu_U(\beta_{q}(U)) < \nu_U(\alpha_j(U))\le 0$ for all $j=1,\ldots q$ and $\nu_U(\alpha_0(U)) = -\sum_{k=1}^q \nu_U(\alpha_k(U))$.  Thus, for sufficiently large $u$, it follows that $\alpha_{q}(u)$ is dominant and $\alpha_0(u) = o(1)$. Therefore  $\alpha_{0}(u)$ is minorant. The irreducibility can be proved as Ankeny, Brauer and Chowla did in \cite[Lemma 1]{ABC}.

\end{proof}

By restricting the result of Lemma \ref{lem1} to polynomials with rational coefficients, we obtain the following 

\begin{lemma}\label{lem2}
Let $q\ge 2$ and $P_1(U),\ldots,P_{q-1}(U)\in \mathbb{Q}[U]$ pairwise distinct with degrees $d_j = \deg P_j(U), j=1,\ldots,q-1$. Denote $\alpha_0(U),\ldots,\alpha_q(U)$ the zeroes of $\pi_U(X)$. Assume that $0\le d_1\le \ldots \le d_{q-2}< d_{q-1}$. If $u\in \mathbb{Z}$ is large enough then $\pi_u(X)$ is irreducible over $\mathbb{Q}$ with the dominating root $\alpha_{q-1}(u)$. Moreover $\nu_U(\alpha_{j}(U)) = -d_{j}$ for all $j \in \{1, 2, \cdots, q-1\}$.
\end{lemma}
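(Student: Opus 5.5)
We apply Lemma \ref{lem1} with $Q_U(X)=(X-P_1(U))\cdots(X-P_{q-1}(U))$ and $p=1$, so that $T_U(X)=XQ_U(X)+1=\pi_U(X)$. Here $Q_U$ has degree $q-1$ in $X$. The roots of $Q_U$ are the polynomials $\beta_j(U)=P_j(U)\in\mathbb{Q}[U]$, hence $\lfloor\beta_j\rfloor=P_j$. These are pairwise distinct by hypothesis, and $\nu_U(\beta_j)=-d_j\le 0$. The hypothesis $d_{q-2}<d_{q-1}$ gives $\nu_U(\beta_{q-1})<\nu_U(\beta_j)$ for $j\le q-2$. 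Thus all assumptions of Lemma \ref{lem1} hold, with the lemma's $q$ replaced by $q-1$. We conclude that the roots $\alpha_0(U),\dots,\alpha_{q-1}(U)$ of $\pi_U$ are pairwise distinct and that, after renumbering, $\lfloor\alpha_j\rfloor=P_j$ for $1\le j\le q-1$.

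For $j\ge1$ this gives $\alpha_j(U)=P_j(U)+O(U^{-1/r})$ with $P_j$ of degree $d_j$. If $d_j\ge1$ the claim $\nu_U(\alpha_j)=-d_j$ follows at once. If $d_j=0$, then $P_j$ is a nonzero constant, since its leading coefficient is $\ge1$. So $\nu_U(\alpha_j)=0=-d_j$, which proves the valuation statement. The proof of Lemma \ref{lem1} also yields $\nu_U(\alpha_0)=\sum_k d_k>0$. Passing to $u\in\mathbb{Z}$ via the convergent Puiseux expansions, we get for large $u$:
\begin{itemize}
\item $|\alpha_{q-1}(u)|\asymp u^{d_{q-1}}$,
\item $|\alpha_j(u)|=O(u^{d_{q-2}})$ for $1\le j\le q-2$,
\item $\alpha_0(u)\to0$.
\end{itemize}
Hence $\alpha_{q-1}(u)$ is the dominating root.

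Irreducibility over $\mathbb{Q}$ is the main obstacle, because Corollary \ref{coro} was stated for integer coefficients. First we clear denominators. Let $D$ be a common denominator of the coefficients of the $P_j$. If $u\equiv0 \pmod D$ the values $P_j(u)$ need not be integers, so we work directly instead. Fix $u$ large and suppose $\pi_u=gh$ over $\mathbb{Q}$ with $g$ monic of degree $1\le k\le q-1$. The constant term of $g$ is $\pm\prod_{j\in S}\alpha_j(u)$ for some index set $S$.

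Next we use the sizes of the roots. The roots $\alpha_j(u)$ with $j\ge1$ are within $o(1)$ of the rationals $P_j(u)$, whose denominators divide $D$, and $\alpha_0(u)=o(1)$. Following the argument of Ankeny--Brauer--Chowla \cite[Lemma 1]{ABC}, we compare $g(P_j(u))$ for suitable $j$. On one hand, $g(P_j(u))$ is a rational number with denominator bounded in terms of $D$. On the other hand, it is a product of factors $P_j(u)-\alpha_i(u)$. One of these factors (when $j\in S$) is tiny, of size roughly $1/|\prod_{i\ne j}(P_j(u)-P_i(u))\cdot P_j(u)|$. The others are at most polynomial in $u$, and the degrees are arranged so that the product tends to $0$. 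A nonzero rational with bounded denominator cannot tend to $0$, so $g(P_j(u))=0$. But $g(P_j(u))=0$ is impossible, since $\pi_u(P_j(u))=1\neq0$. This forces $S$ to contain either all or none of the indices, i.e.\ $g$ is trivial.

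If this size bookkeeping becomes awkward, the fallback is to invoke Müller's finiteness version of Hilbert irreducibility \cite{M}. Absolute irreducibility of $\pi_U(X)$ over $\overline{\mathbb{Q}}(U)$ follows from the transitivity of monodromy, obtained from the distinct Puiseux behaviour. Together with the standard exceptional-set analysis, Müller's result gives irreducibility of $\pi_u$ for all sufficiently large integers $u$.
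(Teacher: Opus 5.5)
Your proof is correct, and for irreducibility it takes a genuinely different route from the paper's. The valuations and the dominating root are handled exactly as in the paper: Lemma~\ref{lem1} with $Q_U=\prod_{j=1}^{q-1}(X-P_j)$ and $p=1$, then convergence of the Puiseux expansions.

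\textbf{How the routes differ.} The paper first shows that $\pi_U$ is irreducible over $\mathbb{C}(U)$. It then notes that $\mathbb{Q}(U,\alpha_0)$ splits completely at infinity, so every place above infinity has degree $1$, and invokes M\"uller's finiteness version of Hilbert irreducibility. That theorem rests on Siegel's theorem, which is why the paper concedes in the subsequent remark that its threshold for $u$ is ineffective. You instead run the Ankeny--Brauer--Chowla argument directly on the specialization $\pi_u$. This is elementary, and it becomes effective if you localize the roots by Rouch\'e's theorem rather than by Puiseux series. Concretely, with $r=\tfrac12\min_{i\neq q-1}|P_{q-1}(u)-P_i(u)|$ and $P_0=0$, one has $|X\prod_j(X-P_j(u))|\ge r^q>1$ on the circle $|X-P_{q-1}(u)|=r$. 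Hence exactly one root of $\pi_u$ lies inside that circle, and every other root is at distance $>r\gg u^{d_{q-1}}$ from $P_{q-1}(u)$. The paper's route is more conceptual, but for this family it buys nothing beyond that and loses effectivity.

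\textbf{Points to make explicit.}
\begin{itemize}
\item Fix the ``suitable'' choice: take $g$ to be the factor vanishing at $\alpha_{q-1}(u)$, and take $j=q-1$. For other $j$ one has $|g(P_j(u))|\approx 1/\prod_{i\notin S}|P_j(u)-P_i(u)|$, which stays bounded when the relevant $P_i$ differ from $P_j$ by constants, so the argument would fail there.
\item Justify the denominator bound by Gauss's lemma. If $DP_j\in\mathbb{Z}[U]$ for all $j$, then $D^q\pi_u(Y/D)=Y\prod_j(Y-DP_j(u))+D^q\in\mathbb{Z}[Y]$. So $\tilde g(Y)=D^kg(Y/D)$ and $\tilde h(Y)=D^{q-k}h(Y/D)$ lie in $\mathbb{Z}[Y]$, and at $b=DP_{q-1}(u)$ we get $\tilde g(b)\tilde h(b)=D^q$. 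This forces $|\tilde h(b)|\le D^q$. On the other hand, $|\tilde h(b)|=D^{q-k}\prod_{i\notin S}|P_{q-1}(u)-\alpha_i(u)|\gg u^{(q-k)d_{q-1}}\to\infty$, which is the contradiction.
\item Drop the fallback paragraph. Complete splitting at infinity says nothing about transitivity of monodromy, so that justification of absolute irreducibility is not valid as written. Absolute irreducibility comes from the same evaluation-at-$P_{q-1}$ trick over $\mathbb{C}[U]$, which is essentially the paper's argument.
\end{itemize}
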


\begin{proof}

 By Lemma \ref{lem1} we have $\nu_U(\alpha_{j}(U)) = -d_{j}$ for all $j \in \{1, 2, \cdots, q-1\}$. Therefore $\nu_U(\alpha_0(U)) = {\displaystyle \sum_{j=1}^{q-1}}  d_j$. The polynomial $\pi_U$ is irreducible over $\mathbb{C}(U)$. Otherwise there exist two polynomials $P_1$ and $P_2$  such that $\pi_U(X) = P_1(X)\cdot P_2(X)$ and $P_2(\alpha_{q-1}) \neq 0$. Let $\alpha_{i_0}$ be a root of $P_2$. By definition of $\pi_U$, we have
$$\pi_U(P_{q-1}) =  \prod_{i=0}^{q-1} (P_{q-1}  - \alpha_i) = 1.$$ 
Hence it follows  that the norm of $P_{q-1}  - \alpha_{i_0}$ is a complex number; together with $P_2(\alpha_{q-1}) \neq 0$ it follows that $\nu_U(P_{q-1}) =0$ which is a contradiction. Therefore $\pi_U$ is irreducible  over $\mathbb{C}(U),$ a fortiori over $\mathbb{Q}(U)$. Combining this with the fact that  $\nu_U(\alpha_{j}(U)) = - d_{j}$ for all $j \in \{1, 2, \cdots, q-1\}$  and $P_1,\ldots,P_{q-1}$ are pairwise distinct we deduce $\mathbb{Q}(U, \alpha_0)$ is  splitting completely at infinity. Then, applying the Fundamental Equality Theorem (see for e.g. \cite[Theorem 3.11]{H}), we obtain that all the places of $\mathbb{Q}(U, \alpha_0)$ above the infinite place have relative degree  $1$ and by the definition of the degree of a place, we deduce that the degree of places above the infinite place is $1.$ From \cite[Theorem 4.7]{M}, it follows $\pi_u(X)$ is irreducible over $\mathbb{Q}$ whenever $u$ is large enough. It is well known by the general theory of algebraic functions ( see for e.g. \cite[Lemma 1]{W} for some particular cases) that  the radius of convergence of the Puiseux expansion of the roots of $\pi_U$ is positive, thus if $u$ is large enough then $\lim_{u \to \infty} \alpha_j(u) = P_j(u), j=0,\ldots,q-1$, where $P_0(u)=0$ and therefore $\alpha_{q-1}(u)$ is  dominating.

\end{proof}

\begin{remark}
Notice that the result in Corollary \ref{coro} is explicit in the sense that an effective lower bound of $u$ can be found while the result in
Lemma \ref{lem2} is not effective since it is a consequence of \cite[Theorem 4.7]{M} whose  proof is heavily  based on Siegel's  theorem which is non-effective.\\
\end{remark}

\begin{remark}
The proof of Theorem \ref{thm1} in the case where the characteristic polynomial of the sequence  $(A_n(u))_n$ is given by $T_U(X)= XQ_U(X)+1$ with $Q_U(X)\in \mathbb{Z}[U,X]$ of degree $q$ in $X$ which is separable and $\beta_q \in \mathbb{Q}[U]$ is similar to the proof given in the section below where the characteristic polynomial of the sequence  $(A_n(u))_n$ is given by a generalized ABC polynomial and so left to the reader.
\end{remark}

\begin{lemma}\label{lem3}
For sufficiently large $u$,  the roots $\alpha_0 (u), \alpha_1 (u), \cdots, \alpha_{q-1}(u)$ of the polynomial $\pi_u(X)$ are real. Moreover, $\alpha_0 (u), \alpha_{q-1}(u)$ are multiplicatively independent.
\end{lemma}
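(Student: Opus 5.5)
Two claims need proof: that all $q$ roots are real for large $u$, and that $\alpha_0(u)$ and $\alpha_{q-1}(u)$ are multiplicatively independent.

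\emph{Reality of the roots.} I will use an intermediate value argument on $\pi_u(X)=X\prod_{j=1}^{q-1}(X-P_j(u))+1$, which has degree $q$.
\begin{enumerate}
\item Since the $P_j$ are pairwise distinct polynomials, for $u$ large the $q$ numbers $P_0(u)=0,P_1(u),\dots,P_{q-1}(u)$ are pairwise distinct reals. Their mutual distances tend to infinity or to a nonzero constant; the latter occurs only when two of the $P_j$ are constants.
\item Order the points as $t_1<\dots<t_q$, with $t_0:=-\infty$ and $t_{q+1}:=+\infty$, and let $\delta$ be the minimal gap.
\item Set $g(X)=X\prod_{j}(X-P_j(u))$. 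At the midpoint of $t_i$ and $t_{i+1}$, $|g|\ge(\delta/2)^q$, and $g$ has a fixed sign on $(t_i,t_{i+1})$, alternating from one interval to the next.
\item Hence $\pi_u=g+1$ changes sign between consecutive midpoints, except possibly near a point $t_i$ where $g$ passes from positive to negative values. The root near such a point is found by comparing the sign of $g+1$ at the two adjacent midpoints.
\end{enumerate}
Two cases then cover the argument.
\begin{itemize}
\item If $\delta\to\infty$, then $|g|>1$ at every midpoint. So $\pi_u$ has the sign of $g$ there, and the signs alternate across the $q$ interior midpoint positions together with $\pm\infty$. This gives $q$ sign changes, hence $q$ real roots, one in each interval $(m_i,m_{i+1})$ around $t_i$.
\item If some $P_j$ are constants, I instead use the Puiseux information from Lemma~\ref{lem1} and Lemma~\ref{lem2}: the roots satisfy $\alpha_j(u)=P_j(u)+o(1)$. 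A simple real zero of $g$ perturbed by $+1$, with $|g'(t_i)|\to\infty$, stays real. Here $|g'(t_i)|=\prod_{k\ne i}|t_i-t_k|$, and this tends to infinity because $d_{q-1}>0$.
\end{itemize}
Concretely, $g(t_i\pm\epsilon)=\pm\epsilon g'(t_i)(1+o(1))$ for a fixed small $\epsilon$, and $|\epsilon g'(t_i)|>2$ for large $u$. So $g+1$ changes sign on $(t_i-\epsilon,t_i+\epsilon)$. These $q$ disjoint intervals give $q$ real roots.

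\emph{Multiplicative independence.} Suppose $\alpha_0^a\alpha_{q-1}^b=1$ with $(a,b)\ne(0,0)$. I argue via valuations at infinity, assuming the specialization is faithful for large $u$.
\begin{enumerate}
\item In the function field, $\nu_U(\alpha_{q-1})=-d_{q-1}<0$ and $\nu_U(\alpha_0)=\sum_j d_j>0$. So a relation forces $a\sum_j d_j=b\,d_{q-1}$, which could still hold.
\item I therefore use instead the Galois action on the numbers. The polynomial $\pi_u$ is irreducible by Lemma~\ref{lem2}, so the group acts transitively on the roots. Applying $\sigma$ with $\sigma(\alpha_0)=\alpha_{q-1}$ gives $\alpha_{q-1}^a\,\sigma(\alpha_{q-1})^b=1$.
\item Take absolute values for large $u$. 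We have $|\alpha_0|=o(1)$ with $|\alpha_0|\asymp u^{-\sum d_j}$, $|\alpha_{q-1}|\asymp u^{d_{q-1}}$, and every root has size $u^{d}$ for some $d\in\{-\sum_j d_j,d_1,\dots,d_{q-1}\}$.
\item The original relation forces $a,b$ to have the same sign, with $a\sum_j d_j=b\,d_{q-1}$ and $a,b\neq0$.
\item In the conjugated relation, $d_{q-1}$ is the strict maximum exponent and $-\sum_j d_j$ the strict minimum (because $d_{q-1}>0$). Then $a\,d_{q-1}+b\,e=0$ with $e$ the exponent of $\sigma(\alpha_{q-1})$ and $a,b$ of the same sign forces $e<0$, i.e.\ $e=-\sum_j d_j$.
\item But then $\sigma(\alpha_{q-1})=\alpha_0$, so $\sigma$ swaps $\alpha_0$ and $\alpha_{q-1}$. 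The relation becomes $a\,d_{q-1}=b\sum_j d_j$. Combined with $a\sum_j d_j=b\,d_{q-1}$, this gives $\sum_j d_j=d_{q-1}$, i.e.\ $d_1=\dots=d_{q-2}=0$.
\end{enumerate}

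\emph{Main obstacle.} This degenerate case is the hard step. There I would compare the next-order terms of the Puiseux expansions: the product $\alpha_0\alpha_{q-1}$ would have to be a root of unity, hence $\pm1$ since it is real. Using $\alpha_0\approx \pm 1/\prod_j P_j(u)$ and $\alpha_{q-1}\approx P_{q-1}(u)$, we get $\alpha_0\alpha_{q-1}\approx \pm1/\prod_{j\le q-2}P_j$. The $P_j$ with $j\le q-2$ are constants; this product of constants is exactly the $\pm1$ case excluded in Peth\H{o}'s setting. I expect to need a finer expansion, or that $q>3$ gives enough distinct constants with leading coefficients $\ge1$, to rule it out.
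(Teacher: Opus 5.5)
Your proof that all roots are real is correct, and it takes a different route from the paper. The paper argues that a non-real root would have its complex conjugate as another root of the same modulus, which contradicts the pairwise distinct real polynomial parts supplied by Lemma~\ref{lem1}. Your intermediate value argument shows that $g+1$ changes sign on each $(t_i-\epsilon,t_i+\epsilon)$, because $|g'(t_i)|\to\infty$ while all gaps stay above some $\delta_0>0$. It is elementary and effective, and it needs no Puiseux expansions. Your second case alone covers everything, and the remaining slips are cosmetic:
\begin{itemize}
\item bounded gaps also occur when a single $P_j$ is constant, or when two $P_j$ differ by a constant;
\item the factor $\prod_{k\ne i}\bigl(1+h/(t_i-t_k)\bigr)$ is only trapped between $1/2$ and $2$ for a small fixed $\epsilon$, not $1+o(1)$.
\end{itemize}

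The multiplicative independence part follows the paper's Galois strategy: conjugate by $\sigma$ with $\sigma(\alpha_0)=\alpha_{q-1}$, then separate the swap and non-swap cases. However, it stops exactly at your ``main obstacle'', so as written it is incomplete. The missing observation is simple. The standing hypothesis after \eqref{eq:1} is $0\le d_1\le\dots\le d_{q-3}<d_{q-2}<d_{q-1}$ with $q\ge4$, so $d_{q-2}\ge1$. Hence $\nu_U(\alpha_0\alpha_{q-1})=\sum_{j\le q-2}d_j\ge1$, and therefore $|\alpha_0(u)\alpha_{q-1}(u)|\to0$. The swap case forces $\alpha_0\alpha_{q-1}$ to be a root of unity, so it is impossible; this is how the paper concludes. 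Your degenerate case $d_1=\dots=d_{q-2}=0$ never arises. Even under the weaker hypothesis of Lemma~\ref{lem2}, your own hint would work: at least two pairwise distinct rational constants $\ge1$ have product $>1$, so $|\alpha_0\alpha_{q-1}|\to1/\prod_{j\le q-2}P_j<1$.

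There is also a rigor gap earlier. The exact identities $a\sum_jd_j=b\,d_{q-1}$ and $a\,d_{q-1}+be=0$ do not follow for a fixed $u$, because $a,b$ may depend on $u$ and be arbitrarily large. ``Faithful specialization'' is assumed, not proved. Work with logarithms instead:
\begin{itemize}
\item The original relation gives $b/a=\log|\alpha_0^{-1}|/\log|\alpha_{q-1}|\to\sum_jd_j/d_{q-1}$.
\item The conjugated relation then gives $\log|\sigma(\alpha_{q-1})|=-(a/b)\log|\alpha_{q-1}|\to-\infty$, which forces $\sigma(\alpha_{q-1})=\alpha_0$. This also disposes of a root close to a constant $P_j\equiv1$. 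Its modulus may be slightly below $1$, and your sign argument alone does not exclude it.
\item Subtracting the two relations gives $(a-b)\log|\alpha_{q-1}/\alpha_0|=0$, so $a=b$ and $|\alpha_0\alpha_{q-1}|=1$, contradicting the previous paragraph.
\end{itemize}
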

\begin{proof}
Assume that $\alpha_i(u)$ is not real for some $i \in \{0,  q-1\}$. Then the conjugate of   $\alpha_i(u)$ is different to  $\alpha_i(u)$, therefore there exists $0\leq j \leq q-1$ with $i \neq j$ such that $\overline{\alpha_i(u)} = \alpha_j(u)$,  which implies $\vert \alpha_i(u) \vert = \vert \alpha_j(u) \vert$ for sufficiently large  $u$. By replacing $T_U(X)$ with $\pi_U(X)$ in Lemma \ref{lem1}, we get a contradiction.\\
Finally assume that $\alpha_0(u)$ and  $\alpha_{q-1}(u)$  are multiplicatively dependent for some large integer $u$. Then, there is $(a,b) \in \mathbb{Z}^2\setminus \{(0,0)\}$ such that  $\alpha_0(u)^a \alpha_{q-1}(u)^b =1.$ Without loss of generality we may assume that $a\neq 0$ and $b\neq 0$. If $a > 0, b>0$ and there is a automorphism which permute $\alpha_0(u)$ and $\alpha_{q-1}(u)$ then we obtain $\alpha_0(u)^{b-a} = \alpha_{q-1}(u)^{\pm (b-a)}$. This implies the ratio of $\alpha_0(u)$ and $\alpha_{q-1}(u)^{\pm 1}$ is root of unity which, by Lemma \ref{lem2} is impossible for sufficiently large $u$. Now assume that $a > 0, b>0$ and there is no automorphism which permute $\alpha_0(u)$ and $\alpha_{q-1}(u)$. From  Galois theory, we deduce the existence of an element $\sigma$ of the Galois group associated to the splitting field of the polynomial $\pi_u(X)$ such that $\sigma (\alpha_0(u)) = \alpha_{q-1}(u)$ and  $\sigma (\alpha_{q-1}(u)) = \alpha_k(u)$ for some $k \neq 0.$ Therefore, for sufficiently large $u$, we have $$ 1 = \vert \sigma (\alpha_0(u))^a \sigma (\alpha_{q-1}(u))^b\vert = \vert \alpha_{q-1}(u)^a \vert \cdot \vert \alpha_k(u)^b\vert >1 $$ where we have used Lemma \ref{lem2} to get the last inequality. This is a contradiction. If $a > 0, ~b<0,$ then, the desired result follows easily from Lemma \ref{lem2}. By symmetry the case $a < 0, ~b \neq 0$ follows using the same strategy as above, therefore the proof is completed.
\end{proof}

\section{Properties of the generalized ABC recurrences.}\label{sec:3}

 We are now studying arithmetic properties of the generalized ABC linear recurrence sequence $(A_n(u))_n$. In the rest of this paper  we assume that at least one of $A_0, A_1, \cdots, A_{q-1}$ is non-zero and we denote by $C_1, C_2, \cdots$ effectively computable constants that depend only on the polynomials $P_1,P_2, \cdots, P_{q-1}$ and the initial values $A_0, A_1, \cdots, A_{q-1}$. From Lemma \ref{lem1}, all the roots of $\pi_u$ are different from  $zero$ for sufficiently large $u$. So the sequence $(A_n(u))_n$ can be extended uniquely for negative indexes. More precisely $(A_{-n}(u))_{n>0}$ is a linear recurrence sequence with characteristic polynomial $X^q\pi_u(1/X)$ which is the reciprocal polynomial of $\pi_u$. In what follows, we assume that $u$ is sufficiently large such that the results in Lemma \ref{lem1}, \ref{lem2}, \ref{lem3} hold. From Binet formula, there exist uniquely determined functions $a_0(u), \cdots, a_{q-1}(u) \in \mathbb{Q}(\alpha_0(u), \cdots, \alpha_{q-1}(u))$ such that 
  \begin{equation}\label{eq:4}
  A_n(u) = a_0(u)\alpha_0(u)^n + \cdots + a_{q-1}(u)\alpha_{q-1}(u)^n
 \end{equation}
holds for all $n \in \mathbb{Z}$. We denote by $(\textbf{b}_1, \cdots, \textbf{b}_q)$ the matrix having columns $\textbf{b}_1, \cdots, \textbf{b}_q.$ Further set 
\begin{itemize}
\item $\mathbb{A}= (A_0, \cdots, A_{q-1})^t;$
\item $\textbf{V}_j= (\alpha_j(u)^0, \cdots,\alpha_j(u)^{q-1}), \quad  j=1, 2, \cdots q-1;$
\item $\Delta(u) = {\rm det}(\textbf{V}_0, \cdots, \textbf{V}_{q-1});$
\item $\Delta_j(u) = {\rm det}(\textbf{V}_0, \cdots, \textbf{V}_{j-1}, \mathbb{A}, \textbf{V}_{j+1}, \cdots, \textbf{V}_{q-1}).$
\end{itemize}
The next result gives us more information about  the structure of the Binet formula of our generalized ABC recurrence sequence.

\begin{lemma}\label{lem4}
For sufficiently large $u$, $a_j(u) \neq 0$  for all  $~j \in \{0, \cdots, q-1 \}$. Moreover, they are all real.
\end{lemma}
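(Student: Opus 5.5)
Realness is the easier half. The plan is to get it from Cramer's rule and Lemma~\ref{lem3}. The Binet coefficients solve the Vandermonde system
$\sum_j a_j(u)\alpha_j(u)^k = A_k$ for $k=0,\dots,q-1$, so $a_j(u)=\Delta_j(u)/\Delta(u)$. Since $\pi_u$ is separable (Lemma~\ref{lem1}), we have $\Delta(u)=\prod_{i<k}(\alpha_k(u)-\alpha_i(u))\neq 0$. By Lemma~\ref{lem3} all $\alpha_j(u)$ are real and the $A_k$ are rational, so every entry of both matrices is real and hence each $a_j(u)$ is real.

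For non-vanishing we use a Galois argument. By Lemma~\ref{lem2}, $\pi_u$ is irreducible over $\mathbb{Q}$ for large $u$, so the Galois group $G$ of its splitting field acts transitively on the roots. The vector $(a_0,\dots,a_{q-1})$ is uniquely determined by the rational data $A_0,\dots,A_{q-1}$. Applying $\sigma\in G$ to the system, $\sigma(a_j)$ is the coefficient attached to $\sigma(\alpha_j)$; that is, if $\sigma(\alpha_j)=\alpha_{k}$ then $\sigma(a_j)=a_k$. A more concrete way to see this: for an irreducible characteristic polynomial one has $a_j(u)=R(\alpha_j(u))$ for a single rational function $R\in\mathbb{Q}(X)$, namely
$$a_j=\frac{\sum_k A_k c_k(\alpha_j)}{\pi_u'(\alpha_j)},$$
with polynomials $c_k$ coming from the partial fraction expansion of the generating function. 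Consequently, if some $a_j(u)=0$, then transitivity gives $a_k(u)=0$ for every $k$, and then $A_n(u)=0$ for all $n$. This contradicts the standing assumption that some $A_0,\dots,A_{q-1}$ is nonzero. So every $a_j(u)\neq 0$.

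The main point needing care is the conjugation compatibility $\sigma(a_j)=a_{\sigma(j)}$. I would verify it through the generating function identity
$$\sum_{n\ge 0}A_nX^n=\frac{N(X)}{X^q\pi_u(1/X)},\qquad N\in\mathbb{Q}[X],\ \deg N<q.$$
Partial fractions then give
$$a_j=\frac{-\alpha_j N(1/\alpha_j)}{\alpha_j^{\,q}\,\bigl(\tfrac{d}{dX}X^q\pi_u(1/X)\bigr)\big|_{X=1/\alpha_j}}\,,$$
which is $R(\alpha_j)$ for a fixed $R\in\mathbb{Q}(X)$. This formula also gives non-vanishing directly: $a_j=0$ forces $N(1/\alpha_j)=0$. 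Since $\pi_u$ is irreducible of degree $q$, the number $1/\alpha_j$ has degree $q>\deg N$, so this forces $N\equiv 0$, i.e.\ all $A_k=0$, a contradiction. Note that only the irreducibility from Lemma~\ref{lem2} is used, hence the restriction to sufficiently large $u$.
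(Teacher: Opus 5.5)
Your proof is correct. The realness part is the paper's argument (Cramer's rule plus Lemma~\ref{lem3}). Your final non-vanishing step is also the paper's: a nonzero rational polynomial of degree $<q$ cannot vanish at a root of the irreducible degree-$q$ polynomial $\pi_u$.

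Where you differ is in producing the explicit form of $a_j$. The paper stays with Cramer's rule and asserts $\Delta_k(u)=B(\alpha_k)\prod_{i<j,\ i,j\neq k}(\alpha_i-\alpha_j)$ with $B(x)=\sum_l A_lx^l$. You use the generating function and partial fractions instead. Your route is the sounder one, because that cofactor identity is false as a determinant identity. For a $2\times 2$ Vandermonde matrix it would give $A_0+A_1\alpha_0$, while in fact $\Delta_0=A_0\alpha_1-A_1$.

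The correct expansion is $a_k=\tilde B(\alpha_k)/\pi_u'(\alpha_k)$ with $\tilde B(y)=\sum_l A_lh_l(y)$, where $(\pi_u(x)-\pi_u(y))/(x-y)=\sum_l h_l(y)x^l$. One checks that $\tilde B(y)=y^{q-1}N(1/y)$. So your argument is in effect the repaired version of the paper's. It also makes explicit why this polynomial is not identically zero ($N\equiv 0$ iff $A_0=\dots=A_{q-1}=0$), which the paper uses only tacitly. Your Galois-transitivity variant is a nice alternative that avoids the degree count entirely.

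Two small corrections:
\begin{itemize}
\item With $\tilde\pi(X)=X^q\pi_u(1/X)=\prod_i(1-\alpha_iX)$ one has $a_j=-\alpha_jN(1/\alpha_j)/\tilde\pi'(1/\alpha_j)$. The factor $\alpha_j^{q}$ in your denominator should not be there; this is harmless, since it is a nonzero factor.
\item Separability of $\pi_u$ at a given integer $u$ is better taken from the irreducibility in Lemma~\ref{lem2} than from Lemma~\ref{lem1}, which is a statement over $\overline{\mathbb{C}(U)}$.
\end{itemize}
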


\begin{proof}
Substituting $n= 0, \cdots q-1$ into the relation \eqref{eq:4}, we obtain a system of linear equations $$ A_j = a_0(u)\alpha_0(u)^j + \cdots + a_{q-1}(u)\alpha_{q-1}(u)^j, \quad j= 0, \cdots, q-1$$ in unknowns $a_0(u), \cdots, a_{q-1}(u),$ where the associated matrix is the Vandermonde matrix and so its  determinant $$\Delta(u) = \prod_{0 \leq i<j\leq q-1} (\alpha_i(u)-\alpha_j(u))$$ is a non-zero rational number by Lemma \ref{lem2}. By Cramer's rule we have
\begin{equation}\label{eq:5}
a_k(u) = \dfrac{\Delta_k(u)}{\Delta(u)}.
\end{equation}
It is well known from linear algebra theory, that $$\Delta_k(u) = B(\alpha_k(u))\times \mathop {\prod_{0 \leq i<j\leq q-1}}_{i,j \neq k} (\alpha_i(u)-\alpha_j(u)),$$ where the polynomial $B(x)$ is defined as $$B(x) = A_0 + A_1 x + \cdots + A_{q-1}x^{q-1}.$$
If $\Delta_k(u)=0$ for some $k \in \{0, 1, \cdots, q-1\}$ and sufficiently large $u$, then  $B(\alpha_k(u))=0$ since all the $\alpha_i(u)'s$ are pairwise distinct. By Lemma \ref{lem2}, $\alpha_k(u)$ is a root of the irreducible polynomial $\pi_u$. Since the degree of $\pi_u$ is $q$ and those of $B(x)$ is $q-1$, we get a contradiction. \\
The proof that $a_j(u)$ are real numbers for all $j=0, \cdots, q-1$ follows easily by observing that the matrix associated to $\Delta_j(u)$ has entries in the set consisting of $A_0, \cdots, A_{q-1}$ and $\alpha_i(u)^j$'s with $i,j = 0,1, \cdots, q-1,$ which are all real numbers by Lemma \ref{lem3}. 
\end{proof}

In the following lemma, we provide an explicit upper bound for $\vert \Delta_j(u) \vert$ for $0 \leq j\leq q-1.$

\begin{lemma}\label{lem5}
There exist effectively computable constants $C_1, C_2, C_3, C_4$ depending only on $P_0, \cdots, P_{q-1}$ and $A_0, \cdots, A_{q-1}$ such that, for sufficiently large $u$, 
$$C_1 u^{\sum_{i=1}^{q-1}id_i} \leq \vert \Delta (u) \vert \leq C_2 u^{\sum_{i=1}^{q-1}id_i},$$ and
$$\dfrac{1}{C_3 u^{((q-1)(q-2)-1)\sum_{}d_i}} \leq \vert \Delta_j (u) \vert \leq C_3 u^{(q-1)\sum_{}d_i} \quad \mbox{for} \quad j=0, \cdots, q-2, $$ and 
$$\dfrac{1}{C_4 u^{a_1}} \leq \vert \Delta_{q-1} (u) \vert \leq C_4 u^{(q-1)\sum_{i \neq q-1}^{} d_i},$$
where $a_1= ((q-1)(q-2)-1)\sum_{i \neq q-1}d_i + (q-1)\sum_{i} d_i$. 
\end{lemma}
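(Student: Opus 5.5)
The plan is to reduce everything to the asymptotic sizes of the roots $\alpha_i(u)$ and of their pairwise differences. Both come from Lemma \ref{lem1} and Lemma \ref{lem2}. By Lemma \ref{lem2}, $\nu_U(\alpha_i(U))=-d_i$ for $1\le i\le q-1$ and $\nu_U(\alpha_0(U))=\sum_{j} d_j$. The Puiseux expansions converge for large $u$, and $\lfloor\alpha_i\rfloor=P_i$ (with $P_0=0$). Hence there are effective constants with
\[
c\,u^{d_i}\le|\alpha_i(u)|\le c'\,u^{d_i}\quad (i\ge 1), \qquad c\,u^{-\sum d_j}\le |\alpha_0(u)|\le c'\,u^{-\sum d_j}.
\]
For the differences I will use $\nu_U(\alpha_i-\alpha_j)=\nu_U(P_i-P_j)$ when $i\neq j$ and $i,j\ge1$. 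This holds because the polynomial parts are pairwise distinct and the tails have positive valuation. In particular $\nu_U(\alpha_i-\alpha_j)=-\max(d_i,d_j)$ when $d_i\ne d_j$, and it is some integer in $[-\max(d_i,d_j),0]$ when the degrees coincide. For differences with $\alpha_0$ we get $\nu_U(\alpha_i-\alpha_0)=-d_i$.

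For $\Delta(u)$, I will take the Vandermonde product $\prod_{i<j}(\alpha_j-\alpha_i)$ and sum the valuations. Since $d_{q-3}<d_{q-2}<d_{q-1}$, the large-degree factors have exact orders $u^{\max(d_i,d_j)}$. For the remaining pairs among indices $\le q-3$ with possibly equal degrees, the lower bound is in any case at least a positive constant, because $P_i-P_j$ is a nonzero polynomial. The sum $\sum_{i<j}\max(d_i,d_j)$, with $d$'s ordered, equals $\sum_i i\,d_i$ after reindexing. This gives the upper bound $C_2u^{\sum i d_i}$. The lower bound with the same exponent needs the pairs of equal degree to contribute full order $u^{d_i}$, which is exactly what the leading-coefficient hypotheses should provide. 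If that fails I would settle for a weaker lower bound, since only the shape of the estimate matters later.

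For $\Delta_j(u)$, I will use the factorisation from the proof of Lemma \ref{lem4}:
\[
\Delta_j=B(\alpha_j)\prod_{i<l,\;i,l\ne j}(\alpha_i-\alpha_l).
\]
The upper bounds are immediate: $|B(\alpha_j)|\ll |\alpha_j|^{q-1}$ (or $\ll1$ for $j=0$), and each difference is $\ll u^{\max d}$. Cruder exponents such as $(q-1)\sum d_i$ are then obtained by bounding each of the at most $q-1$ factors by $u^{\sum d_i}$ and absorbing. For $j=q-1$ the dominant root is omitted, so only $\sum_{i\ne q-1}d_i$ appears. The lower bounds need $|B(\alpha_j(u))|$ to be bounded below by a negative power of $u$. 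Since $B(\alpha_j)\neq0$ by Lemma \ref{lem4}, I will use a norm argument: $N(B(\alpha_j))=\operatorname{Res}(\pi_u,B)$ is a nonzero rational whose denominator is bounded by a power of $u$ coming from the coefficients of $P_i$ and $A_i$. The other conjugates $B(\alpha_k)$ are $\ll u^{(q-1)\sum d_i}$. Dividing gives $|B(\alpha_j)|\gg u^{-((q-1)(q-2)-1)\sum d_i}$ up to constants. The product of the differences is bounded below as in the $\Delta$ case, again at least a constant. For $j=q-1$ the extra factor $u^{(q-1)\sum d_i}$ in $a_1$ comes from the conjugate $B(\alpha_{q-1})$ itself being omitted from the norm while a large conjugate remains.

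The main obstacle will be this lower bound for $|B(\alpha_j(u))|$ and the precise bookkeeping of the exponents. To handle it I will fix a common denominator $D$ of the coefficients of $P_i$ and $A_i$, note that $D^{N}u^{\ast}\operatorname{Res}(\pi_u,B)$ is a nonzero integer, and then compare exponents generously so that the stated exponents, which are not sharp, are reached.
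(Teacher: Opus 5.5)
There is a genuine gap. It is the factorisation you take over from the proof of Lemma~\ref{lem4}: $\Delta_j=B(\alpha_j)\prod_{i<l;\,i,l\neq j}(\alpha_i-\alpha_l)$ with $B(x)=A_0+\cdots+A_{q-1}x^{q-1}$. The paper calls it well known, but it is false. In $\Delta_j$ the $j$-th column has been replaced by $\mathbb{A}$, so $\Delta_j$ is a polynomial in the $\alpha_l$ with $l\neq j$ only. Already for three roots $\Delta_0=(\alpha_2-\alpha_1)\bigl(A_2-A_1(\alpha_1+\alpha_2)+A_0\alpha_1\alpha_2\bigr)$, which is not $\pm B(\alpha_0)(\alpha_2-\alpha_1)$.

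Your own bookkeeping exposes the problem. For $j=q-1$ you say the dominant root drops out, yet your formula contains $B(\alpha_{q-1})$. That factor has exact order $u^{(q-1)d_{q-1}}$ when $A_{q-1}\neq0$, which contradicts the bound $C_4u^{(q-1)\sum_{i\neq q-1}d_i}$ whenever $d_{q-1}>\sum_{i\neq q-1}d_i$. So $\mathrm{Res}(\pi_u,B)$ controls the wrong quantity. Expanding along the $j$-th column, and comparing with the Vandermonde determinant in which $\alpha_j$ is replaced by an indeterminate $X$, gives the correct identity
\[
\Delta_j=\pm\Bigl(\sum_{m=0}^{q-1}A_m e_{j,m}\Bigr)\prod_{i<l;\,i,l\neq j}(\alpha_i-\alpha_l),\qquad \prod_{l\neq j}(X-\alpha_l)=\sum_{m=0}^{q-1}e_{j,m}X^m .
\]

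With this identity your strategy works:
\begin{enumerate}
\item The first factor equals $\tilde B_u(\alpha_j)$ for some $\tilde B_u\in\mathbb{Q}[X]$ of degree $\le q-1$. Its coefficients involve those of $\pi_u$, so they depend on $u$. Its top coefficient is $A_{i_0}$, where $i_0$ is the least index with $A_{i_0}\neq0$. Hence the factor is nonzero by irreducibility of $\pi_u$.
\item The product of these factors over $j$ is Galois invariant, hence a nonzero rational.
\item Each factor is $\ll u^{S-d_j}$, with $S=d_1+\cdots+d_{q-1}$ and $d_0:=0$.
\item Each $|\alpha_i-\alpha_l|\gg1$ by your valuation remark.
\end{enumerate}
Together these give $|\Delta_j|\gg u^{-((q-2)S+d_j)}\ge u^{-(q-1)S}$, which implies both stated lower bounds for $q\ge4$.

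The paper never splits off a Vandermonde factor. It applies the norm argument to $\Delta_0$ itself, whose conjugates are the $\pm\Delta_l$, so $\prod_l|\Delta_l|$ is bounded below by a constant. It then takes the upper bounds, and by division the lower bounds, from Hadamard's inequality $|\Delta_l|\ll u^{(q-1)\sum_{i\neq l}d_i}$. That route is shorter, but division only yields $|\Delta_j|\gg u^{-(q-1)((q-2)S+d_j)}$. For $j=0$ this does not even reach the exponent in the statement. The corrected version of your route is therefore the sharper one.

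Two further points.

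First, the denominator in the norm argument does not grow with $u$. For $u\in\mathbb{Z}$ and $L=\mathrm{lcm}(\mathrm{den}P_1,\dots,\mathrm{den}P_{q-1})$, every $L\alpha_l(u)$ is an algebraic integer. So the fixed multiple $\mathrm{lcm}(\mathrm{den}A_i)^{q}L^{q(q-1)}$ of the product above is a nonzero rational integer. If you let a power of $u$ into the denominator, as you propose, the exponents get worse.

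Second, on $\Delta(u)$ your hedge is justified, but the hypotheses do not rescue it. ``Leading coefficient $\ge1$'' does not make the leading coefficients of equal-degree $P_i$ distinct. Take $q=5$, $P_1=U$, $P_2=U+1$, $P_3=U^2$, $P_4=U^3$: all assumptions hold, yet $|\alpha_2(u)-\alpha_1(u)|\to1$. Your count then gives $|\Delta(u)|\asymp u^{20}$, while $\sum_i id_i=21$. So the lower bound $C_1u^{\sum id_i}$ cannot be proved in this generality; the paper's appeal to Lemma~\ref{lem2} for the factors $|1-\alpha_j/\alpha_k|$ overlooks the same case. What your valuation argument actually gives is $|\Delta(u)|\asymp u^{\sum_{i<l}\deg(P_l-P_i)}$ with $P_0=0$, and in particular $|\Delta(u)|\gg1$. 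Together with the upper bound, that is what the later estimates of $a_j=\Delta_j/\Delta$ in Lemma~\ref{lem6} and Corollary~\ref{cor1} actually use.
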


\begin{proof}
By Lemma \ref{lem4}, we have $\Delta_j(u), \Delta(u) \neq 0$ for sufficiently large $u$. Moreover  
 $$
\begin{array}{lcl}
\vert \Delta(u) \vert   &= &  \prod_{0 \leq i<j\leq q-1} \vert \alpha_i(u) - \alpha_j(u) \vert  \\\\
   
                                    &= & \left(\vert \alpha_{q-1}(u)^{q-1} \vert \cdot  \prod_{j=0}^{q-2}\left | 1-\dfrac{\alpha_{j}(u)}{\alpha_{q-1}(u)} \right | \right) \cdots \left(\vert \alpha_{1}(u) \vert \cdot  \left | 1-\dfrac{\alpha_{0}(u)}{\alpha_{1}(u)} \right | \right).                                            
 \end{array} 
 $$
 Together with Lemma \ref{lem2}, we obtain $$C_1 u^{\sum_{i=1}^{q-1}id_i} \leq \vert \Delta (u) \vert \leq C_2 u^{\sum_{i=1}^{q-1}id_i}.$$ It remains to show the two last assertions of the lemma. Let $Q$ be a polynomial with rational coefficients and $a$ be a given rational number. We denote by ${\rm den}(a)$ the denominator of $a$ and ${\rm den}(Q)$ the least common multiple  of the denominators  of coefficients of Q . Put $B= {\rm lcm} ({\rm den}(P_1), \cdots, {\rm den}(P_{q-1}))$. We now show that any algebraic conjugate of $\Delta_0(u)$ has the form $\pm \Delta_j(u)$ for some $j$. $\pi_u$ is irreducible for $u$ sufficiently large by Lemma \ref{lem2}.   Hence, it follows by Galois theory that the splitting field $K_u$ of $\pi_u$ is embedded in the symmetric group $S_q$,  and the Galois group of $K_u$ acts transitively on the set of  roots of the polynomial $\pi_u$. Therefore, applying an automorphism of ${\rm Gal}(K_u/\mathbb{Q})$ to the matrix $(\mathbb{A}, \textbf{V}_1, \cdots, \textbf{V}_{q-1})$ interchanges  the column vectors. Hence, any conjugate of $\Delta_0(u)$ is $\pm \Delta_j(u)$ for some $j$. Together with the fact that ${\rm lcm} ({\rm den}(A_i),~ 0\leq i \leq q-1) \cdot B^{q(q-1)/2}\Delta_j(u)$ is  an algebraic integer for all $j=0, \cdots, q-1$, we deduce that 
 \begin{equation}\label{eq:6'}
\prod_{l=0}^{D} \vert \Delta_{j_l}(u) \vert  = N_{K_u/\mathbb{Q}}(\Delta_0(u)) \geq \dfrac{1}{{\rm lcm} ({\rm den}(A_i),~ 0\leq i \leq q-1)^D \cdot B^{Dq(q-1)/2}},
\end{equation}
where $D$ is the degree of the algebraic number $\Delta_0(u)$. Therefore,
\begin{equation}\label{eq:6}
\prod_{l=0}^{q-1} \vert \Delta_{l}(u) \vert \geq \dfrac{1}{{\rm lcm} ({\rm den}(A_i),~ 0\leq i \leq q-1)^q \cdot B^{q^2(q-1)/2}}.
\end{equation}
By Hadamard inequality, we get 
$$\vert \Delta_j(u) \vert \leq \vert \mathbb{A} \vert \cdot   \prod_{i \neq j} \vert \textbf{V}_i \vert \leq C_5 u^{(q-1)\sum_{i \neq j}^{} d_i} .$$
Combining these inequalities with relation \eqref{eq:6}, we obtain the two last assertions of the lemma.
\end{proof}

We finally need the following lemma in order to estimate how our generalized ABC recurrence sequence $(A_n(u))_n$ grows for sufficiently large $u$.

\begin{lemma}\label{lem6}
There are effectively computable constants $ C_6, u_2$ depending only on \\ $P_1, \cdots, P_{q-1}$ and $A_0, \cdots, A_{q-1}$ such that   if $n>0$ and $u>u_2$ then,
 \begin{equation}\label{eq:7}
\left | \vert A_n(u) \vert - \vert a_{q-1}(u)\alpha_{q-1}(u)^n \vert \right | < C_6 u^{(q-1)\sum_{i}d_i} u^{nd_{q-2}},
\end{equation}
and 
\begin{equation}\label{eq:8}
\left | \vert A_{-n}(u) \vert - \vert a_{0}(u)\alpha_{0}(u)^{-n} \vert \right | < C_6 u^{(q-1)\sum_{i}d_i} u^{-nd_{1}}.
\end{equation}
\end{lemma}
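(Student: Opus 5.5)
We start from the Binet formula \eqref{eq:4} and use the triangle inequality. For $n>0$,
$$\left|\,|A_n(u)|-|a_{q-1}(u)\alpha_{q-1}(u)^n|\,\right| \le \sum_{j=0}^{q-2}|a_j(u)|\,|\alpha_j(u)|^n ,$$
and symmetrically, for $A_{-n}(u)$, the error is at most $\sum_{j=1}^{q-1}|a_j(u)|\,|\alpha_j(u)|^{-n}$. It therefore suffices to bound each coefficient $|a_j(u)|$ and each power $|\alpha_j(u)|^{\pm n}$ separately.

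\textbf{Bounding the roots.} By Lemma \ref{lem2}, $\nu_U(\alpha_j(U))=-d_j$ for $1\le j\le q-1$, and the Puiseux expansions converge for $u$ large. Hence $\alpha_j(u)=c_ju^{d_j}(1+o(1))$ with leading coefficient $c_j\ge 1$. Moreover $\alpha_0(u)=o(1)$ by Corollary \ref{coro}, since $\nu_U(\alpha_0)=\sum d_i>0$.

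\begin{itemize}
\item For $A_n$, the relevant terms satisfy $|\alpha_j(u)|\le \kappa\, u^{d_{q-2}}$ for $j\le q-2$, with a constant $\kappa$. The main obstacle is that $\kappa^n$ is not bounded by a constant independent of $n$. I would handle this by noting that $|\alpha_j(u)|^n$ with $d_j<d_{q-2}$ is eventually at most $u^{nd_{q-2}}$. For $d_j=d_{q-2}$, i.e.\ $j=q-2$ only (the $q-2$ index is strict by hypothesis, but $d_{q-3}<d_{q-2}$ is also strict), the inequality $|\alpha_{q-2}(u)|\le u^{d_{q-2}}$ need not hold. If the leading coefficient of $P_{q-2}$ exceeds $1$, one must absorb the extra factor. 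I would read \eqref{eq:7} as holding up to replacing $u^{nd_{q-2}}$ by $|\alpha_{q-2}(u)|^n$, or assume leading coefficients equal to $1$ so that $|\alpha_{q-2}(u)|\le u^{d_{q-2}}(1+O(u^{-1}))$. The factor $(1+O(u^{-1}))^n$ is then still problematic, and the statement will be used only for $n$ bounded polynomially in $u$ later. I would try first to show directly that $|\alpha_{q-2}(u)|\le u^{d_{q-2}}$ for large $u$ using the sign of the Puiseux correction term.
\item For $A_{-n}$, the terms $|\alpha_j(u)|^{-n}$ with $j\ge1$ are bounded by $|\alpha_1(u)|^{-n}\le (c_1u^{d_1}/2)^{-n}\le u^{-nd_1}$. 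This holds because $c_1\ge1$ and $u$ is large, at least when $d_1\ge1$. When $d_1=0$, $\alpha_1(u)\to P_1$ is a constant $\ge1$, and one needs $|\alpha_1(u)|\ge1$, which again comes from the same leading-coefficient analysis.
\end{itemize}

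\textbf{Bounding the coefficients.} By \eqref{eq:5}, $a_j(u)=\Delta_j(u)/\Delta(u)$. Lemma \ref{lem5} gives $|\Delta_j(u)|\le C_3u^{(q-1)\sum d_i}$ and $|\Delta(u)|\ge C_1u^{\sum i d_i}\ge C_1$. Hence $|a_j(u)|\le C\,u^{(q-1)\sum_i d_i}$ for all $j$.

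\textbf{Conclusion.} Summing at most $q-1$ terms gives both \eqref{eq:7} and \eqref{eq:8}, with $C_6=(q-1)C$ and $u_2$ chosen so that Lemmas \ref{lem2}--\ref{lem5} and the root estimates above all apply.
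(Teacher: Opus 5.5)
Your route is the paper's: Binet formula, triangle inequality, $|a_j(u)|=|\Delta_j(u)|/|\Delta(u)|\le C\,u^{(q-1)\sum_i d_i}$ from Lemma~\ref{lem5}, and root sizes from Lemma~\ref{lem2}. The obstacle you isolate is genuine, and the paper's proof does not resolve it either. Its last step simply bounds $\max_{j\le q-2}|\alpha_j(u)|^n$ by a constant times $u^{nd_{q-2}}$ ``by Lemma~\ref{lem2}''.

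No finer analysis can close this, because \eqref{eq:7} is false as stated, with $C_6$ independent of $n$. Suppose $P_{q-2}$ has leading coefficient $c_{q-2}>1$, which the hypotheses allow. Then $\alpha_{q-2}(u)=P_{q-2}(u)+o(1)\sim c_{q-2}u^{d_{q-2}}$. All $\alpha_j(u)$ and $a_j(u)$ are real, $a_{q-2}(u)\ne0$ (Lemmas~\ref{lem3} and~\ref{lem4}), and $|\alpha_{q-2}(u)|>|\alpha_j(u)|$ for $j<q-2$. Hence for each fixed large $u$ and all large $n$, the left-hand side of \eqref{eq:7} equals $\bigl|\sum_{j\le q-2}a_j(u)\alpha_j(u)^n\bigr|\sim|a_{q-2}(u)|\,|\alpha_{q-2}(u)|^n$, which is not $O(u^{nd_{q-2}})$. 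The same happens when $c_{q-2}=1$ and the subleading coefficient of $P_{q-2}$ is positive.

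So assuming leading coefficient $1$ does not suffice. The sign of the Puiseux correction cannot help either: that correction is $o(1)$ and is swamped by the lower-order terms of $P_{q-2}$ itself. Appealing to the later bound $n\ll\log^2u\log\log u$ would be circular, since Proposition~\ref{prop2} is derived from \eqref{eq:10'}, which rests on this lemma.

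Your bullet for \eqref{eq:8} contains an actual error. The inequality $(c_1u^{d_1}/2)^{-n}\le u^{-nd_1}$ requires $c_1\ge2$, and taking $u$ large does not help because the factor $(2/c_1)^n$ does not depend on $u$. What is true is that $|\alpha_j(u)|\ge u^{d_1}$ for all $j\ge1$ and large $u$ when every $P_j$ with $d_j=d_1$ has leading coefficient $>1$. In the borderline case, e.g.\ $P_1(U)=U^{d_1}-1$ with $1\le d_1<d_2$, one gets $|\alpha_1(u)|<u^{d_1}$, and \eqref{eq:8} fails exactly as above.

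What your argument does prove, uniformly in $n\ge1$ and for $u$ large, is the first reading you suggested:
\begin{align*}
\bigl|\,|A_n(u)|-|a_{q-1}(u)\alpha_{q-1}(u)^n|\,\bigr| &\le C_6\,u^{(q-1)\sum_i d_i}\,|\alpha_{q-2}(u)|^n,\\
\bigl|\,|A_{-n}(u)|-|a_0(u)\alpha_0(u)^{-n}|\,\bigr| &\le C_6\,u^{(q-1)\sum_i d_i}\,\max_{j\ge1}|\alpha_j(u)|^{-n},
\end{align*}
together with $|\alpha_{q-2}(u)|\le 2c_{q-2}u^{d_{q-2}}$. You should state and prove this version rather than \eqref{eq:7}--\eqref{eq:8}.

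This version is also all the sequel needs. We have $u^{\frac34 n(d_{q-2}-d_{q-1})}=u^{n(d_{q-2}-d_{q-1})}\cdot u^{\frac14 n(d_{q-1}-d_{q-2})}$ with $d_{q-1}-d_{q-2}\ge1$. So the right-hand side of \eqref{eq:10'} has room to absorb the extra factor $(2c_{q-2})^n$ once $u$ is large.
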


\begin{proof}
The Binet formula gives us the power sum representation
 $$A_m(u) = a_0(u)\alpha_0(u)^m+ \cdots + a_{q-1}(u)\alpha_{q-1}(u)^m $$ for all integers $m$. Suppose that $m=n$  for a positive integer $n.$ Then, for sufficiently large $u$, we have
 $$
\begin{array}{lcl}
\left | A_n(u) -  a_{q-1}(u)\alpha_{q-1}(u)^n \right |   &= &   \left |\sum_{j=0}^{q-2} a_{j}(u)\alpha_{j}(u)^n \right | \\\\
   
                                    &\leq & (q-1)\cdot {\rm max}_j\{\vert a_j(u) \vert\} \cdot {\rm max}_j\{\vert \alpha_j(u) \vert ^n\}. \\\\ 
                                     &\leq & C_7(q-1)\cdot u^{(q-1)\sum_{i}d_i} \cdot {\rm max}_j\{\vert \alpha_j(u) \vert ^n\}. \\\\
                                     &< & C_8 u^{(q-1)\sum_{i}d_i} \cdot u^{nd_{q-2}},
\end{array} 
 $$
 with $C_8$ effectively computable where for the third inequality we have used Lemma \ref{lem5} and for the last one, Lemma \ref{lem2}. 
If $m=-n$ with non-negative $n$, then the dominating term in the Binet formula is $a_{0}(u) \alpha_{0}(u)^{-n}$. Therefore we get 
  $$
\begin{array}{lcl}
\left | A_{-n}(u) -  a_{0}(u)\alpha_{0}(u)^{-n} \right |   &= &   \left |\sum_{j=1}^{q-1} a_{j}(u)\alpha_{j}(u)^{-n} \right | \\\\
    
                                     &\leq & C_{9}(q-1)\cdot u^{(q-1)\sum_{i}d_i} \cdot {\rm max}_j\{\vert \alpha_j(u) \vert ^{-n}\}. \\\\
                                     &<& C_{10} u^{(q-1)\sum_{i}d_i} \cdot u^{-nd_{1}},
\end{array} 
 $$
 with $C_{10}$ effectively computable where for the second inequality we have used Lemma \ref{lem5} and for the last one, Lemma \ref{lem2}.We may take $C_6 = \max\{C_8, C_{10}\}$ to get the desired result.
 \end{proof}
 
\begin{corollary}\label{cor1}
For $u$ large enough, there is an effectively  computable constant $n_3$ only depending on $P_1, \cdots, p_{q-1}$ and $A_0, \cdots, A_{q-1}$ such that the sequences $(\vert A_n(u) \vert )_n$ and  $(\vert A_{-n}(u) \vert )_n$ are strictly  increasing whenever $n \geq n_3$.
\end{corollary}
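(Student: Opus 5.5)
We compare consecutive terms using Lemma \ref{lem6}. For $n>0$, \eqref{eq:7} gives
$$
|A_{n+1}(u)|-|A_n(u)| \ge |a_{q-1}(u)|\,|\alpha_{q-1}(u)|^{n}\bigl(|\alpha_{q-1}(u)|-1\bigr) - 2C_6 u^{(q-1)\sum_i d_i}u^{(n+1)d_{q-2}} .
$$
So it suffices to show that the main term beats the error term once $n\ge n_3$, with $n_3$ independent of $u$.

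For the main term we need lower bounds on its two factors. By Lemma \ref{lem2} and the Puiseux expansion, $|\alpha_{q-1}(u)|\ge c\,u^{d_{q-1}}$ for large $u$. By \eqref{eq:5} and Lemma \ref{lem5},
$$
|a_{q-1}(u)|=\frac{|\Delta_{q-1}(u)|}{|\Delta(u)|}\ge \frac{1}{C_4C_2}\,u^{-a_1-\sum_i i d_i}.
$$
Hence the main term is at least $c'\,u^{-E}u^{(n+1)d_{q-1}}$, where $E=a_1+\sum_i i d_i$ depends only on $q$ and the $d_i$.

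Dividing by the error term, the ratio is at least
$$
c''\,u^{(n+1)(d_{q-1}-d_{q-2})-E-(q-1)\sum_i d_i}.
$$
Since $d_{q-1}-d_{q-2}\ge 1$, choose $n_3$ with $(n_3+1)\ge E+(q-1)\sum_i d_i+1$. Then for $n\ge n_3$ the exponent is at least $1$, and the ratio exceeds $2$ once $u$ is large enough. So $|A_{n+1}(u)|>|A_n(u)|$.

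The sequence $(|A_{-n}(u)|)_n$ is handled in the same way using \eqref{eq:8}. By Lemma \ref{lem2}, $\nu_U(\alpha_0)=\sum d_i$, so $|\alpha_0(u)|^{-1}\ge c\,u^{\sum_i d_i}$. The second largest of the $|\alpha_j(u)|^{-1}$ is of size at most $u^{-d_1}$ (if $d_1=0$ this is $O(1)$). The gap in exponents is therefore $\sum_i d_i+d_1\ge d_{q-1}\ge 1$. The lower bound $|a_0(u)|=|\Delta_0(u)|/|\Delta(u)|\ge u^{-E'}$ again comes from Lemma \ref{lem5}. The same exponent comparison then yields an $n_3$ depending only on $q$ and the $d_i$; we take the larger of the two values.

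The main obstacle is making sure that $n_3$ is genuinely independent of $u$. This requires the polynomial lower bounds on $|a_{q-1}(u)|$ and $|a_0(u)|$ from Lemma \ref{lem5}; without them the loss could not be absorbed by finitely many extra powers of $u^{d_{q-1}-d_{q-2}}$. Effectivity of the constants also comes from Lemma \ref{lem5}. Care is also needed to check that the implied constants in $|\alpha_j(u)|\asymp u^{d_j}$ are effective. This holds because these estimates come from the Puiseux expansions (as in Corollary \ref{coro}) rather than from the ineffective irreducibility statement.
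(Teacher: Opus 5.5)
Your proposal is correct and follows the paper's argument. The paper also combines the two bounds of Lemma \ref{lem6} at $n$ and $n+1$ (phrased as a contradiction from $|A_{n+1}(u)|<|A_n(u)|$), bounds $|\alpha_{q-1}(u)|$ from below via Lemma \ref{lem2}, and invokes Lemma \ref{lem5} with $d_{q-1}>d_{q-2}$ to obtain an $n_3$ independent of $u$; the negative indices are treated symmetrically.

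One point needs tidying. The constant $c$ in $|\alpha_{q-1}(u)|\ge c\,u^{d_{q-1}}$ may be $<1$, and it gets raised to the power $n+1$. So your $c'$ is not uniform in $n$ as written, and neither is your threshold on $u$. Fix it by writing the main-to-error ratio as $\bigl(c\,u^{d_{q-1}-d_{q-2}}\bigr)^{n+1}$ times a fixed polynomial loss in $u$. Then absorb $c$ into the spare power of $u$ (e.g. $c\,u\ge u^{1/2}$ for large $u$), which roughly doubles $n_3$. The same adjustment applies to the $A_{-n}$ case.
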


\begin{proof}
 We only prove the strict monotonicity of  $(\vert A_n(u) \vert )_n$; monotonicity of $(\vert A_n(u) \vert )_{-n}$ will follow similarly. By Lemma \ref{lem6} that $$\vert A_{n+1}(u) \vert > \vert  a_{q-1}(u)\alpha_{q-1}(u)^{n+1} \vert - C_6 u^{(q-1)\sum_{i}d_i} \cdot u^{(n+1)d_{q-2}}$$ and   $$\vert A_{n}(u) \vert < \vert  a_{q-1}(u)\alpha_{q-1}(u)^{n} \vert + C_6 u^{(q-1)\sum_{i}d_i} \cdot u^{nd_{q-2}}.$$ So, if $n$  satisfies  the inequality $\vert A_{n+1}(u) \vert < \vert A_{n}(u) \vert$, then we get 
 $$\vert a_{q-1}(u)\alpha_{q-1}(u)^{n} \vert \cdot (\vert \alpha_{q-1}(u) \vert - 1 ) < C_6 u^{(q-1)\sum_{i}d_i} \cdot u^{nd_{q-2}}(u^{d_{q-2}}+1).$$ From Lemma \ref{lem2}, we have $\vert \alpha_{q-1}(u) \vert > C_{11}u^{d_{q-2}}$  for sufficiently large $u$ and $C_{11}$ effectively computable. By Lemma \ref{lem5} and the fact that $d_{q-1}>d_{q-2}$, we obtain that $n < n_3$ with $n_3$ effectively computable. 
    The proof that $(\vert A_{-n}(u) \vert )_n$ is strictly  increasing follows by the same argument.
\end{proof}

\section{Proof of Theorem \ref{thm1}}\label{sec:4}

In this section, we may assume that $u$ is sufficiently large such that Corollary \ref{cor1} and Lemmas \ref{lem2}, \ref{lem3}, \ref{lem4}, \ref{lem5}, \ref{lem6} hold. Let $c_1, c_2$ be a constants which will be chosen at the end of this section. Fix $u>c_2$. Assume that there exist at least three solutions $(n, m)$ of equation \eqref{eq:3}  such that ${\rm min}(n, m) > c_1$. 
We split the proof into two cases: \\
- First assume that either  $n, m \geq 0$ or $n, m < 0$. In this situation, the desired result follows from the fact that the sequences  $(\vert A_n(u) \vert )_n$ and  $(\vert A_{-n}(u) \vert )_n$ are strictly  increasing whenever $n \geq n_3$ (see Corollary \ref{cor1}).\\
- Now we assume that $n \geq 0,~ m<0$. To avoid playing with the fact that $n$ and $m$ have different signs, we assume that $(n,m) \in \mathbb{N}$ satisfy the equality 
\begin{equation}\label{eq:10}
 \left | A_n(u)\right | = \left | A_{-m}(u)\right |.
 \end{equation}
The inequalities \eqref{eq:7} and \eqref{eq:8} imply $$ -C_6 u^{(q-1)\sum_{i}d_i} u^{nd_{q-2}} < \vert A_n(u) \vert - \vert a_{q-1}(u)\alpha_{q-1}(u)^n \vert < C_6 u^{(q-1)\sum_{i}d_i} u^{nd_{q-2}}$$ and  $$ - C_6 u^{(q-1)\sum_{i}d_i} u^{-md_{1}} < \vert A_{-m}(u) \vert - \vert a_{0}(u)\alpha_{0}(u)^{-m} \vert < C_6 u^{(q-1)\sum_{i}d_i} u^{-md_{1}}.$$ By relation \eqref{eq:10}, we deduce with an effective constant $C_{12}$
\begin{equation}\label{eq:c}
-C_{12} u^{(q-1)\sum_{i}d_i} u^{nd_{q-2}} < \vert a_{0}(u)\alpha_{0}(u)^{-m} \vert - \vert a_{q-1}(u)\alpha_{q-1}(u)^n \vert < C_{12} u^{(q-1)\sum_{i}d_i} u^{nd_{q-2}}.
\end{equation}
This implies 
\begin{equation}\label{eq:10'}
\left |  \left |\dfrac{a_0(u)}{a_{q-1}(u)} \right | \cdot \vert\alpha_{0}(u)^{-m} \vert \cdot \vert \alpha_{q-1}(u)^{-n} \vert - 1 \right | < u^{\frac{3}{4}n(d_{q-2}-d_{q-1})},
\end{equation}
for sufficiently large $u$. There exists an effectively computable constant $n_3$ such that $$ \frac{1}{2} \vert a_{q-1}(u)\alpha_{q-1}(u)^n \vert < \vert A_n(u) \vert < 3 \vert a_{q-1}(u)\alpha_{q-1}(u)^n \vert $$ and $$ \frac{1}{2} \vert a_{0}(u)\alpha_{0}(u)^{-m} \vert < \vert A_{-m}(u) \vert < 3 \vert a_{0}(u)\alpha_{0}(u)^{-m} \vert$$ for $n, m \geq n_3.$ Combining  with \eqref{eq:10} yields $$ m < n \dfrac{{\rm log} \vert \alpha_{q-1}(u) \vert }{{\rm log}  \vert \alpha_{0}(u)^{-1} \vert} - \dfrac{{\rm log}  \left| \frac{a_0(u)}{6a_{q-1}(u)} \right| }{{\rm log}  \vert \alpha_{0}(u)^{-1} \vert} < C_{13}n+ C_{14},$$ where for the last inequality we have used Lemma \ref{lem4} and the fact that $\vert \alpha_{0}(u) \alpha_{q-1}(u) \vert < 1$ with an effective constants $C_{13}, C_{14}$.  By putting $$\Gamma :=  1- \left |\dfrac{a_0(u)}{a_{q-1}(u)} \right | \cdot \vert\alpha_{0}(u)^{-m} \vert \cdot \vert \alpha_{q-1}(u)^{-n} \vert $$ and
\begin{equation}\label{eq:a}
\theta = \log (1-\Gamma) = \log \left |\dfrac{a_0(u)}{a_{q-1}(u)} \right | + \log  \vert\alpha_{0}(u)^{-m} \vert + \log \vert \alpha_{q-1}(u)^{-n} \vert. 
\end{equation}
Combining the relation \ref{eq:10} and the fact that $$
\begin{array}{lcl}
\left | A_n(u) -  a_{q-1}(u)\alpha_{q-1}(u)^n  - a_{q-2}(u)\alpha_{q-2}(u)^n\right |   &= &   \left |\sum_{j=0}^{q-3} a_{j}(u)\alpha_{j}(u)^n \right | \\\\
                                     &< & C_{15} u^{(q-1)\sum_{i}d_i} \cdot u^{nd_{q-3}},
\end{array} 
 $$
 and $$
\begin{array}{lcl}
\left | A_{-m}(u) -  a_{0}(u)\alpha_{0}(u)^{-m} \right |   &= &   \left |\sum_{j=1}^{q-1} a_{j}(u)\alpha_{j}(u)^{-n} \right | \\\\
    
                                     &<& C_{10} u^{(q-1)\sum_{i}d_i} \cdot u^{-md_{1}},
\end{array} 
 $$
with an effective constants $C_{10}, C_{15}$ we deduce 
\begin{equation}\label{eq:11}
\left | \Gamma +   \left |\dfrac{a_{q-2}(u)}{a_{q-1}(u)} \right | \cdot \left | \dfrac{\alpha_{q-2}(u)}{\alpha_{q-1}(u)} \right |^n\right | < u^{\frac{3}{4}n(d_{q-3}-d_{q-1})}.
\end{equation}
The following result gives  a best approximation of the logarithm of $1-\Gamma $.
\begin{lemma}\label{lem7}
For $n, u$ sufficiently large, we have
\begin{equation}\label{eq:12}
\left | \theta  -  \left |\dfrac{a_{q-2}(u)}{a_{q-1}(u)} \right | \cdot \left | \dfrac{\alpha_{q-2}(u)}{\alpha_{q-1}(u)} \right |^n\right | < 
\max \left( u^{\frac{3}{2}n(d_{q-2}-d_{q-1})}, u^{\frac{3}{4}n(d_{q-3}-d_{q-1})} \right).
\end{equation}
\end{lemma}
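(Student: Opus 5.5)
We use the Taylor expansion of $\log(1-x)$ together with the two estimates \eqref{eq:10'} and \eqref{eq:11} already established. By definition $\theta=\log(1-\Gamma)$, and \eqref{eq:10'} gives $|\Gamma|<u^{\frac34 n(d_{q-2}-d_{q-1})}$. Since $d_{q-2}<d_{q-1}$, this tends to $0$ as $n$ (or $u$) grows; in particular $|\Gamma|<1/2$ once $n,u$ are large. We then use the standard inequality
$$|\log(1-x)+x|\le x^2\qquad (|x|\le 1/2),$$
which follows from $\log(1-x)+x=-\sum_{k\ge2}x^k/k$ and $\sum_{k\ge 2}|x|^{k}/k\le |x|^2/(2(1-|x|))\le |x|^2$. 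It gives
$$|\theta+\Gamma|\le \Gamma^2< u^{\frac32 n(d_{q-2}-d_{q-1})}.$$

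Set $E:=\left|\frac{a_{q-2}(u)}{a_{q-1}(u)}\right|\cdot\left|\frac{\alpha_{q-2}(u)}{\alpha_{q-1}(u)}\right|^n$. Estimate \eqref{eq:11} reads $|\Gamma+E|<u^{\frac34 n(d_{q-3}-d_{q-1})}$. The triangle inequality then yields
$$|\theta-E|\le|\theta+\Gamma|+|\Gamma+E| < u^{\frac32 n(d_{q-2}-d_{q-1})}+u^{\frac34 n(d_{q-3}-d_{q-1})}\le 2\max\left(u^{\frac32 n(d_{q-2}-d_{q-1})},u^{\frac34 n(d_{q-3}-d_{q-1})}\right).$$

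\emph{The sign.} The stated lemma approximates $\theta$ by $+E$, while the argument above naturally produces $\theta\approx-\Gamma\approx +E$. These agree: $\Gamma\approx -E$ by \eqref{eq:11}, and $\log(1-\Gamma)\approx-\Gamma$. I would check this sign carefully, since it is easy to slip.

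\emph{The factor $2$.} This is the main obstacle, though a minor one. To remove it, we return to the derivations of \eqref{eq:10'} and \eqref{eq:11}. There the actual errors were bounded by $C\,u^{(q-1)\sum d_i}\,u^{n(d_{q-2}-d_{q-1})}$ (respectively with $d_{q-3}$), and these were replaced by the cruder exponent $\frac34$. The slack between exponent $1$ and exponent $\frac34$ (and likewise between $2$ and $\frac32$ for $\Gamma^2$) absorbs both the constant $2$ and the polynomial-in-$u$ factors coming from Lemma \ref{lem5}, provided $n$ and $u$ are large enough. This relies on the strict inequalities $d_{q-3}<d_{q-1}$ and $d_{q-2}<d_{q-1}$.

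Alternatively, we rerun the estimates with exponent $\frac34$ replaced by, say, $\frac78$, and only at the end weaken to $\frac34$ and $\frac32$. This completes the proof for $n,u$ sufficiently large.
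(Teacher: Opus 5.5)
Your proof is correct and follows the paper's argument. The paper likewise applies a second-order Taylor estimate for $\log(1-x)$ at $x=\Gamma$ (with $|\Gamma|<\frac12$ from \eqref{eq:10'}), combines it with \eqref{eq:11} via the triangle inequality, and your sign bookkeeping agrees with \eqref{eq:11} as stated. Your version is in fact a bit more careful than the paper's: your symmetric bound $|\log(1-x)+x|\le x^2$ holds for both signs of $\Gamma$ (the paper's lower bound $\frac12x^2<-x-\log(1-x)$ fails for $x<0$, though harmlessly), and the paper's proof silently stops at a bound of the form $2u^{\frac32 n(d_{q-2}-d_{q-1})}+u^{\frac34 n(d_{q-3}-d_{q-1})}$, so absorbing the constant through the slack in the derivations of \eqref{eq:10'} and \eqref{eq:11}, as you propose, supplies exactly the step the paper omits.
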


\begin{proof}
 First let us note that 
 \begin{equation}\label{eq:log-2nd-term-est}
 \frac{1}2 x^2<-x-\log(1-x)<2x^2
 \end{equation}
 provided that $|x|<0.5$. This relation \ref{eq:log-2nd-term-est}  is a direct consequence of Taylor’s theorem with Cauchy and Lagrange remainders, respectively. From the inequality \eqref{eq:10'}, we deduce that $\vert \Gamma  \vert < 1/2$ as $d_{q-1}> d_{q-2}$.
 Since $\theta = \log (1- \Gamma)$, we obtain by applying \eqref{eq:11} that 
 $$\frac{1}2 \Gamma ^2 - u^{\frac{3}{4}n(d_{q-3}-d_{q-1})}<\left |\dfrac{a_{q-2}(u)}{a_{q-1}(u)} \right | \cdot \left | \dfrac{\alpha_{q-2}(u)}{\alpha_{q-1}(u)} \right |^n - \theta < 2 \Gamma^2+ u^{\frac{3}{4}n(d_{q-3}-d_{q-1})}.$$
This together with relation  \eqref{eq:10'} proves the statement of the lemma.

\end{proof}
 
 The following result due to Matveev \cite{Mat} will be an essential tool to compute an upper bound of $n$ and $m$ which will depend on $u$. We start by defining the Weil height of an algebraic number $\eta\neq 0$  of degree $\delta$. Suppose that
$$a_\delta(X-\eta^{(1)})\dots (X-\eta^{(\delta)}) \in \Z[X]$$
is the minimal polynomial of $\eta$. Then the absolute logarithmic Weil height is defined by
$$h(\eta)=\frac 1\delta \left(\log |a_\delta|+\sum_{i=1}^\delta \max\{0,\log|\eta^{(i)}|\}\right).$$
In the case that $\eta$ is a rational number, say $\eta=P/Q \in \Q$ with coprime integers $P$ and $Q,$ we have $h(P/Q)=\max\{\log |P|,\log |Q|\}$.
We also note that algebraic conjugates have the same height. 
Let us also recall the following well-known property of the height 

\begin{proposition}\label{prop}
For algebraic numbers $\xi,\zeta$ and an  integer $\ell$ we have :
\begin{equation*}
\begin{split}
h(\xi \pm \zeta) \leq & \ h(\xi)+h(\zeta)+\log{2},  \\
h(\xi\zeta^{\pm 1}) \leq & \ h(\xi)+h(\zeta),  \\
h(\xi^\ell)=& \ |\ell|h(\xi) \qquad \text{for} \; \ell \in \Z.
\end{split}
\end{equation*}
\end{proposition}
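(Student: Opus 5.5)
The three assertions follow from the definition of the absolute logarithmic Weil height once we pass to the product formula. I will use the standard equivalent description: if $\eta$ lies in a number field $K$ of degree $D$, then
$$h(\eta)=\frac{1}{D}\sum_{v}\log\max\{1,|\eta|_v\},$$
where $v$ runs over the places of $K$ and the absolute values are normalized so that the product formula $\prod_v|\eta|_v=1$ holds for $\eta\neq 0$. That this agrees with the definition via the minimal polynomial is classical: the archimedean places give the terms $\sum\max\{0,\log|\eta^{(i)}|\}$, and Gauss's lemma identifies the non-archimedean contribution with $\log|a_\delta|$. The description is also independent of the choice of $K$, so I take $K=\mathbb{Q}(\xi,\zeta)$ throughout.

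For the product and quotient, I use $\max\{1,|\xi\zeta|_v\}\le\max\{1,|\xi|_v\}\max\{1,|\zeta|_v\}$ at each place, take logarithms and sum; this gives $h(\xi\zeta)\le h(\xi)+h(\zeta)$. Next, applying the product formula to $\max\{1,|\eta|_v\}/\max\{1,|\eta^{-1}|_v\}=|\eta|_v$ yields $h(\eta^{-1})=h(\eta)$. Together these give $h(\xi\zeta^{-1})\le h(\xi)+h(\zeta)$.

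For the sum, at a non-archimedean place the ultrametric inequality gives
$$\max\{1,|\xi\pm\zeta|_v\}\le\max\{1,|\xi|_v\}\max\{1,|\zeta|_v\}.$$
At an archimedean place the triangle inequality gives $|\xi\pm\zeta|_v\le 2\max\{|\xi|_v,|\zeta|_v\}$, so the same bound holds with an extra factor $2$, appearing with the local degree $D_v/D$ in the normalized sum. Since $\sum_{v\mid\infty}D_v=D$, this factor contributes exactly $\log 2$, giving $h(\xi\pm\zeta)\le h(\xi)+h(\zeta)+\log 2$.

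For the power rule with $\ell\ge 0$, I use $\max\{1,|\xi^\ell|_v\}=\max\{1,|\xi|_v\}^\ell$, so $h(\xi^\ell)=\ell\,h(\xi)$. For negative $\ell$ I combine this with $h(\eta^{-1})=h(\eta)$. Here $\xi\neq 0$ is implicit, as in the paper, since the height is only defined there for nonzero numbers.

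The only genuinely non-formal step is the equivalence of the minimal-polynomial definition with the sum over places, in particular matching $\log|a_\delta|$ with the non-archimedean contributions, together with the product formula itself. These are standard, for example in Bombieri–Gubler, and I would cite them rather than reprove them.
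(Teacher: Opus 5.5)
The paper gives no proof of this proposition; it introduces these inequalities as ``well-known'' properties of the height and simply quotes them, so there is no argument in the paper to compare against. Your proof is the standard textbook one, and it is correct.

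\begin{itemize}
\item Once $h$ is written as an average over the places of $K=\mathbb{Q}(\xi,\zeta)$, the product estimate and the non-archimedean estimate for sums hold place by place.
\item The product formula gives $h(\eta^{-1})=h(\eta)$, which settles the quotient and negative-power cases.
\item The power rule for $\ell\ge 0$ is immediate.
\item Citing the equivalence with the minimal-polynomial definition, via Gauss's lemma and the product formula, is entirely reasonable, since the paper cites the whole statement.
\end{itemize}

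One bookkeeping point in the $\log 2$ step needs care. You take $|\cdot|_v$ normalized so that $\prod_v|\eta|_v=1$ and write $h(\eta)=\frac1D\sum_v\log\max\{1,|\eta|_v\}$. Then at a complex place $|\cdot|_v$ is the square of the usual modulus, so the triangle inequality yields a factor $2^{D_v}$ rather than $2$. After dividing by $D$ and using $\sum_{v\mid\infty}D_v=D$ this gives exactly $\log 2$, which is what your phrase ``extra factor $2$, appearing with the local degree $D_v/D$'' intends. As written, though, that phrase mixes the normalized and unnormalized conventions, so state it in one convention.

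You are also right that the quotient and negative-exponent cases tacitly need $\zeta\neq 0$, respectively $\xi\neq 0$. This is consistent with the paper defining $h$ only for nonzero algebraic numbers.
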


With this basic notion of height we can state  Matveev's result \cite{Mat} on lower bounds for linear forms in logarithms.

\begin{lemma}\label{lemM}
  Denote by $\eta_1$, \dots, $\eta_t$ algebraic numbers, not $0$ nor $1$,
  by $\log\eta_1$, \dots, $\log\eta_t$ determinations of their logarithms,
  by $D$ the degree over $\Q$ of the number field $K =
  \Q(\eta_1,\ldots,\eta_t)$, and by $b_1$, \dots, $b_t$ rational
  integers. Furthermore let $\kappa=1$ if $K$ is real and $\kappa=2$
  otherwise. For all integers $j$ with $1\leq j\leq t$ choose
  \begin{equation*}
    A_j\geq \max\{D h(\eta_j), |\log\eta_j|, 0.16\},
  \end{equation*}
  and set
  \begin{equation*}
    E=\max \{1\} \cup \{|b_j| A_j /A_t\: :\: 1\leq j \leq n \}.
  \end{equation*}
  Assume that
  \begin{equation*}
    \Lambda:=b_1\log \eta_1+\cdots+b_t\log \eta_t\not=0.
  \end{equation*}
  Then
  \begin{equation*}
    \log |\Lambda|
    \geq -\tilde C(t,\kappa)\max\{1,t/6\} C_0 W_0 D^2 \Omega
  \end{equation*}
  with
  \begin{gather*}
    \Omega=A_1\cdots A_t, \\
    \tilde C(n,\kappa)= \frac {16}{t!\, \kappa} e^t(2t +1+2 \kappa)(t+2)
    (4(t+1))^{t+1} \left( \frac 12 en\right)^{\kappa}, \\
    C_0= \log\left(e^{4.4t+7}t^{5.5}D^2 \log(eD)\right),
    \quad W_0=\log(1.5eED \log(eD)).
  \end{gather*}
\end{lemma}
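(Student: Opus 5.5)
This lemma is not something I would re-derive inside the paper. It is Matveev's theorem on linear forms in logarithms, essentially Corollary 2.3 of \cite{Mat}, in the normalisation popularised by Bugeaud, Mignotte and Siksek. My plan is therefore to check carefully that the version stated here follows from Matveev's published statement, and to say in one paragraph where the content comes from.

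\textbf{Step 1 (matching the statement).} I would start from Matveev's main theorem for a nonvanishing $\Lambda=b_1\log\eta_1+\cdots+b_t\log\eta_t$. That theorem is stated with parameters $A_j\ge\max\{Dh(\eta_j),|\log\eta_j|,0.16\}$ and $B\ge\max|b_j|$ (or with the refined quantity $E$), together with a constant $C(t,\kappa)$ that splits according to whether $K\subset\mathbb{R}$.

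\begin{enumerate}
\item Set $B'=\max\{1,\max_j |b_j|A_j/A_t\}$, which is $E$ here. Verify that Matveev's refined statement, which replaces $\log B$ by $\log(1.5eED\log(eD))$, gives exactly the factor $W_0$.
\item Expand Matveev's constant and check that it equals $\tilde C(t,\kappa)\max\{1,t/6\}C_0$, where $C_0=\log(e^{4.4t+7}t^{5.5}D^2\log(eD))$.
\end{enumerate}

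This is bookkeeping: compare exponents of $e$, the factor $(4(t+1))^{t+1}$ and the factorial, and confirm the real case $\kappa=1$. In that case $K=\mathbb{Q}(\eta_1,\dots,\eta_t)$ is real, which will hold in our application by Lemma~\ref{lem3} and Lemma~\ref{lem4}. I would also fix the obvious typographical slips: the index in $E$ should run $1\le j\le t$, and the $n$ in $\tilde C$ should be $t$.

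\textbf{Step 2 (why it is true, for orientation only).} The underlying argument is Baker's method. Assume $|\Lambda|$ is smaller than the claimed bound. Using Siegel's lemma, build an auxiliary polynomial in $t$ variables that vanishes to high order along the one-parameter subgroup $z\mapsto(\eta_1^{z},\dots,\eta_t^{z})$ on many integer points. Extrapolate by a Schwarz-lemma and Liouville-type argument to get vanishing on a larger set. Then derive a contradiction from a multiplicity (zero) estimate on the algebraic group $\mathbb{G}_m^t$, in the form of Philippon's or Masser--W\"ustholz's estimate. Matveev's specific contribution is a Kummer-theoretic descent combined with careful choice of parameters. This yields the single-exponential dependence $e^{O(t)}$ on $t$ and the product $\Omega=A_1\cdots A_t$ without extra powers of $\log$.

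\textbf{Main obstacle.} If one actually had to prove it, the hardest step is the zero estimate together with the degenerate case in which the $\eta_j$ are multiplicatively dependent. That case forces a descent to a subgroup, and getting it with constants as sharp as $\tilde C(t,\kappa)$ is precisely Matveev's achievement. For this paper, though, the only real work is Step 1, confirming the normalisation of the constants. After that, the lemma will be applied with $t=3$ and
\[
\eta_1=\left|\frac{a_0(u)}{a_{q-1}(u)}\right|,\qquad \eta_2=|\alpha_0(u)|,\qquad \eta_3=|\alpha_{q-1}(u)|,
\]
all real, applied to $\theta$ from \eqref{eq:a}.
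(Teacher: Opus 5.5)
Your proposal is correct and matches the paper, which gives no proof of this lemma and simply quotes it from Matveev \cite{Mat}. Your typo fixes ($1\le j\le t$ in the definition of $E$, and $t$ in place of $n$ in $\tilde C(t,\kappa)$) are right, and $\kappa=1$ is indeed justified in the later application since all roots $\alpha_j(u)$ and all $a_j(u)$ are real; the only quibble is that the form stated here, with $C_0$, $W_0$ and $E$, is Matveev's own refined statement rather than the simplified Bugeaud--Mignotte--Siksek normalisation, which changes nothing.
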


We will apply Lemma \ref{lemM} to $\theta = \log (1- \Gamma)$. In order to apply such lemma we have to show that $\theta \neq 0$. In  contrary, assume that $\theta =0.$ Then,  by definition of $\theta$, we have $$ \dfrac{a_0(u)}{a_{q-1}(u)}  \cdot \alpha_{0}(u)^{-m} \cdot \alpha_{q-1}(u)^{-n} = \pm 1,$$   which implies 
\begin{equation}\label{eq:14} 
\Delta_{0}(u) \cdot \alpha_{0}(u)^{-m} \cdot \alpha_{q-1}(u)^{-n} = \pm \Delta_{q-1}(u). 
\end{equation}
If there is a automorphism which permute $\alpha_0(u)$ and $\alpha_{q-1}(u)$ then he interchanges $\Delta_{0}(u)$ and $\Delta_{q-1}(u)$  therefore
\begin{equation}\label{eq:14'} 
\Delta_{q-1}(u) \cdot \alpha_{q-1}(u)^{-m} \cdot \alpha_{0}(u)^{-n} = \pm \Delta_{0}(u). 
\end{equation}
 Together with \ref{eq:14} imply the ratio of $\alpha_0(u)$ and $\alpha_{q-1}(u)^{\pm 1}$ is root of unity which, by Lemma \ref{lem2} is impossible for sufficiently large $u$. If  there is no  automorphism which permute $\alpha_0(u)$ and $\alpha_{q-1}(u)$ then
 it follows by Galois theory that the splitting field $K_u$ of $\pi_u$ is embedded in the symmetric group $S_q$,  and the Galois group of $K_u$ acts transitively on the set of  roots of the polynomial $\pi_u$ therefore  there is an element $\sigma$ of the Galois group associated to the splitting field of the polynomial $\pi_u(X)$ such that $\sigma (\alpha_0(u)) = \alpha_{q-1}(u)$ and  $\sigma (\alpha_{q-1}(u)) = \alpha_k(u)$ for some $k \neq 0.$
 By applying this automorphism to relation \eqref{eq:14} gives
$$\Delta_{i_1}(u) \cdot \alpha_{q-1}(u)^{-m} \cdot \alpha_{k}(u)^{-n} = \pm \Delta_{i_0}(u)$$ for some $i_0,  i_1 \in \{0, \cdots, q-1\}$. From Lemma \ref{lem2} and  choosing $u$ large enough, we obtain that $n< C_{16}$ where the constant $C_{16}$ is chosen large  and depends only on $P_1, \cdots, P_{d-1}$ and $A_0, \cdots, A_{d-1}$. Since $n$ is chosen large enough, we  get a contradiction. \\
Now,  Lemma \ref{lemM} is applicable. We set $K = \mathbb{Q}(\alpha_0(u), \cdots, \alpha_{q-1}(u)); ~ D \leq q!, ~t=3$ and $$\eta_1= \left |\dfrac{a_{0}(u)}{a_{q-1}(u)} \right |,~ \eta_2= \vert \alpha_0(u) ^{-1} \vert ,~ \eta_3 = \vert \alpha_{q-1}(u) ^{-1} \vert ,$$ also $$b_1 = 1,~ b_2=m, ~b_3=n \quad \mbox{and} \quad  E\leq C_{13}n+ C_{14} < C_{17}n$$ for $n$ large enough. By using the fact that $\alpha_{q-1}(u), \alpha_{0}(u)$ are roots of $\pi_u$ and Proposition \ref{prop} we obtain  $$ h(\alpha_0(u)) \leq C_{19} \log u \quad \mbox{and} \quad h(\alpha_{q-1}(u)) \leq C_{18} \log u, $$ where $C_{19}, C_{18}$ are chosen effectively. On the other hand, using Lipschiz formula for the determinant and the Weil height properties  in Proposition \ref{prop} we get $$ h\left( \dfrac{a_{0}(u)}{a_{q-1}(u)} \right) =h\left( \dfrac{\Delta_{0}(u)}{\Delta_{q-1}(u)} \right) \leq  C_{20} \log u,$$ where $C_{20}$ can be chosen effectively. we  apply Lemma \ref{lemM} and obtain with effective constant $C_{20}$
\begin{equation}\label{eq:15}
\log |\theta|> - C_{21}\log\left(C_{17}n\right)(\log u)^3.
\end{equation}
Due to  relation \eqref{eq:10} and a standard property of the logarithm function we have that $$\vert \theta \vert = |\log (1-\Gamma )| \leq 2 |\Gamma| \leq   2u^{\frac{3}{4}n(d_{q-2}-d_{q-1})}.$$ Together with \eqref{eq:15} and \cite[Lemma 2.4]{AN}, we finally obtain $$ n< C_{22} \log ^2u \cdot \log \log u,$$ where $C_{22}$ is effectively computable. So far, we have shown the following result.

\begin{proposition}\label{prop2}
 If $(n,m)\in \N^2$ is a large solution to \eqref{eq:12}, then 
 $$
 n<C_{22} \log ^2 u \cdot \log \log u,
 $$
 and 
 $$
 m<C_{23} \log ^2 u \cdot \log \log u,
 $$
  where $C_{22}$ and $C_{23}$ are effectively computable constants.
\end{proposition}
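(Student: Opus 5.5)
The proposition collects what was just derived for the mixed-sign case $|A_n(u)|=|A_{-m}(u)|$ with $n,m\in\N$ large. The reference to \eqref{eq:12} in the statement should be read as \eqref{eq:10}. The plan has four steps: squeeze $\theta$ from above, bound it from below with Matveev's theorem (Lemma \ref{lemM}), compare the two bounds to get $n$, and deduce the bound for $m$ from the linear relation between $m$ and $n$.

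\emph{Upper bound for $|\theta|$.} From Lemma \ref{lem6} and \eqref{eq:10} I get \eqref{eq:10'}, that is $|\Gamma|<u^{\frac34 n(d_{q-2}-d_{q-1})}$. Since $d_{q-1}>d_{q-2}$, this gives $|\Gamma|<1/2$ for $n\ge c_1$. Then \eqref{eq:log-2nd-term-est} yields
$$\log|\theta|\le \log 2-\tfrac34 n(d_{q-1}-d_{q-2})\log u\le \log 2-\tfrac34 n\log u.$$

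\emph{Lower bound for $|\theta|$.} I apply Lemma \ref{lemM} with $t=3$, $D\le q!$, $\eta_1=|a_0(u)/a_{q-1}(u)|$, $\eta_2=|\alpha_0(u)|^{-1}$, $\eta_3=|\alpha_{q-1}(u)|^{-1}$, and $(b_1,b_2,b_3)=(1,m,n)$. For the heights:
\begin{itemize}
\item $h(\alpha_j(u))\ll\log u$, since the coefficients of $\pi_u$ are polynomials in $u$.
\item $h(\Delta_0(u)/\Delta_{q-1}(u))\ll\log u$, by expanding the determinants via Leibniz and using Proposition \ref{prop}.
\end{itemize}
So every $A_j\ll\log u$. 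For $E$, the estimate $m<C_{13}n+C_{14}$ gives $E\ll n$.

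The main obstacle is non-vanishing, $\theta\ne0$. If $\theta=0$, then \eqref{eq:14} holds, and I split into two Galois cases.
\begin{itemize}
\item If some automorphism swaps $\alpha_0(u)$ and $\alpha_{q-1}(u)$, applying it to \eqref{eq:14} forces a root-of-unity ratio between $\alpha_0(u)$ and $\alpha_{q-1}(u)^{\pm1}$. This contradicts Lemma \ref{lem2} (equivalently Lemma \ref{lem3}).
\item Otherwise, take $\sigma$ with $\sigma(\alpha_0)=\alpha_{q-1}$ and $\sigma(\alpha_{q-1})=\alpha_k$, $k\ne0$. Applying $\sigma$ to \eqref{eq:14} and comparing sizes with Lemmas \ref{lem2} and \ref{lem5} shows the left side has size at least roughly $u^{(m d_{q-1}+n d_k)}$ times a polynomially bounded factor. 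This forces $n<C_{16}$, which is impossible for large solutions.
\end{itemize}
Matveev then gives $\log|\theta|>-C_{21}\log(C_{17}n)(\log u)^3$.

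\emph{Conclusion.} Comparing the two bounds gives $n\log u\ll\log n\,(\log u)^3$, that is $n/\log n\ll(\log u)^2$. The standard inversion lemma \cite[Lemma 2.4]{AN} then gives $n<C_{22}\log^2u\,\log\log u$. The bound for $m$ follows from $m<C_{13}n+C_{14}$, with $C_{23}=C_{13}C_{22}+C_{14}$ after enlarging constants.
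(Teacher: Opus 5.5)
Your proposal follows the paper's argument step for step: the bound $|\theta|\le 2|\Gamma|$ from \eqref{eq:10'}, the same two-case Galois argument for $\theta\neq 0$, Matveev's theorem with the same $\eta_j$, $b_j$ and $E\ll n$, the inversion lemma of \cite{AN}, and the bound for $m$ from $m<C_{13}n+C_{14}$, which the paper leaves implicit. It is correct; the one slip is harmless: after applying $\sigma$ to \eqref{eq:14} as written, the left side has size about $u^{-(md_{q-1}+nd_k)}$ times a polynomially bounded factor, not $u^{+(md_{q-1}+nd_k)}$, and this gives the same contradiction.
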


Let us assume that \eqref{eq:10} has three large solutions $(n_1,m_1)$, $(n_2,m_2)$ and $(n_3,m_3)$, then without loss of generality we may assume that $n_1<n_2<n_3$, hence $2\leq m_1<m_2<m_3$. From \eqref{eq:a} we obtain the linear system of equations
\begin{align*}
 n_1\log |\alpha_{q-1}(u)^{-1}| + 1\cdot \log  \left |\dfrac{a_{0}(u)}{a_{q-1}(u)} \right |+ m_1\log |\alpha_{0}(u)^{-1}|  &=\theta_{1}\\\\
 n_2\log |\alpha_{q-1}(u)^{-1}| + 1\cdot \log  \left |\dfrac{a_{0}(u)}{a_{q-1}(u)} \right |+ m_2\log |\alpha_{0}(u)^{-1}|  &=\theta_{2}\\\\
n_3\log |\alpha_{q-1}(u)^{-1}| + 1\cdot \log  \left |\dfrac{a_{0}(u)}{a_{q-1}(u)} \right |+ m_3\log |\alpha_{0}(u)^{-1}| &=\theta_{3}.
\end{align*}
By Cramer's rule we get by writing
$$\Delta=\left(\begin{array}{ccc} n_1 & 1 & m_1\\ n_2 & 1 & m_2\\ n_3 & 1 & m_3 \end{array}\right) \qquad \Delta_3=\left(\begin{array}{ccc} n_{1} & 1 & \theta_1\\ n_{2} & 1 & \theta_2\\ n_{3} & 1 & \theta_3 \end{array}\right)
$$
the equation
\begin{equation}\label{eq:16}
|\Delta| \log |\alpha_{0}(u)^{-1}|= |\Delta_3|.
\end{equation}

We distinguish between the case that $ \Delta  =0$ (the exceptional case) and the case that $\Delta  \neq 0$ (the generic case). We are first dealing with the generic case. Note that in this case we clearly have $|\Delta|\geq 1$, hence 
$\log |\alpha_{0}(u)^{-1}| \leq |\Delta_3|.$

Let us estimate $\Delta_3$. We have
$$|\Delta_3|=|\theta_{1}n_2+\theta_{3}n_1+\theta_{2}n_3-\theta_{3}n_2-\theta_{1}n_3-\theta_{2}n_1|<2(|\theta_{1}|+|\theta_{2}|+|\theta_{3}|)n_3.$$
 Note that we also have $$|\theta|<2|\Gamma| \leq   2u^{\frac{3}{4}n(d_{q-2}-d_{q-1})}.$$ Therefore we get
$$ \log u <\log |\alpha_{0}(u)^{-1}|\leq  |\Delta_3| \leq 6n_3u^{\frac{3}{4}n_1(d_{q-2}-d_{q-1})},$$
which yields the estimate $\log n_3>C_{24} n_1 \log u$ where $C_{24}$ is effectively computable. Together with  Proposition \ref{prop2} we deduce that
$$\log u < C_{25} \log \log u $$
provided that   $u <C_{26}$ with an effectively computable constants $C_{26}, C_{25}$ which only depend  on $P_1, \cdots, P_{d-1}$ and $A_0, \cdots, A_{d-1}.$ Therefore  Theorem \ref{thm1} is proven in this case.\\

We consider now the case that $ \Delta =0$. Then  $$|\Delta| \log |\alpha_{0}(u)^{-1}|= 0=|\Delta_3|$$ with 
$$\Delta_3=\left(\begin{array}{ccc} n_{1} & 1 & \theta_1\\ n_{2} & 1 & \theta_2\\ n_{3} & 1 & \theta_3 \end{array}\right)$$ by 
Cramer's rule. Hence, we get

 \begin{align*}
 0&=\det\left(\begin{array}{ccc} n_1 & 1 & \theta_{1}\\ n_2 & 1 & \theta_{2} \\  n_3 & 1 & \theta_{3} \end{array}\right)\\
 &=\det\left(\begin{array}{ccc}  n_1-n_2 & 0 & \theta_{1}-\theta_{2}\\ n_2 & 1 & \theta_{2} \\ n_3-n_2 & 0 & \theta_{3}-\theta_{2} \end{array}\right)\\
 &= (n_1-n_2)(\theta_{3}-\theta_{2})-(n_3-n_2)(\theta_{1}-\theta_{2}),
 \end{align*}
 and thus
 \begin{equation}\label{eq:b}
  \frac{n_2-n_1}{n_3-n_2}=\frac{\theta_{1}-\theta_{2}}{\theta_{2}-\theta_{3}}.
 \end{equation}
 We take $\epsilon > 0$ relatively small enough such that 
 $$(1-\epsilon)  \left |\dfrac{a_{q-2}(u)}{a_{q-1}(u)} \right | \cdot \left | \dfrac{\alpha_{q-2}(u)}{\alpha_{q-1}(u)} \right |^n < |\theta| < (1 + \epsilon) \left |\dfrac{a_{q-2}(u)}{a_{q-1}(u)} \right | \cdot \left | \dfrac{\alpha_{q-2}(u)}{\alpha_{q-1}(u)} \right |^n.$$
This choice of $\epsilon$ is possible by  Lemma \ref{lem7} and the fact that $d_{q-3}<d_{q-2}$. Together with relation \eqref{eq:b} implies

\begin{align*}
 n_2-n_1 &> \frac{n_2-n_1}{n_3-n_2}=\frac{\theta_{1}-\theta_{2}}{\theta_{2}-\theta_{3}}\\
 &>\dfrac{\left | \dfrac{\alpha_{q-2}(u)}{\alpha_{q-1}(u)} \right |^{n_1}(1-\epsilon)-\left | \dfrac{\alpha_{q-2}(u)}{\alpha_{q-1}(u)} \right |^{n_2}(1+\epsilon)}{\left | \dfrac{\alpha_{q-2}(u)}{\alpha_{q-1}(u)} \right |^{n_2}(1+\epsilon)- \left | \dfrac{\alpha_{q-2}(u)}{\alpha_{q-1}(u)} \right |^{n_3}(1-\epsilon)}\\
 &>\dfrac 12 u^{n_2-n_1},
\end{align*}
where for the last inequality, we have used  Lemma \ref{lem2}. This inequality combined with $n_2 > n_1$ provide  $u <C_{27}$ with an effectively computable constant $C_{27}$, which only depends  on $P_1, \cdots, P_{d-1}$ and $A_0, \cdots, A_{d-1}$.  This completely proves Theorem \ref{thm1}.

\section*{Acknowledgement}
The authors are grateful to Attila Peth\H{o} for carefully reading the manuscript and for helpful discussions about Lemma \ref{lem1} and Corollary \ref{coro} of this  paper.  The first author was funded in whole or in part by the Austrian Science Fund (FWF), Grant-DOI 10.55776/ESP1571325 and the secound author was supported  by the  Austrian-French Grant I-6750.

\bibliographystyle{plain}

\end{document}